\newtheorem{theorem}{Theorem}[section]
\theoremstyle{plain}
\newtheorem{corollary}{Corollary}[section]
\newtheorem{lemma}{Lemma}[section]
\newtheorem{proposition}{Proposition}[section]
\numberwithin{equation}{section}
\theoremstyle{definition}
\theoremstyle{remark}
\newtheorem{remark}{Remark}[section]
\def\pd{\partial}
\def\re{\mathbb{R}}
\def\com{\mathbb{C}}
\def\ti{\tilde}
\def\mbb{\mathbb}
\newcommand{\eqal}[1]{\begin{equation}\begin{aligned}#1\end{aligned}\end{equation}}
\newcommand{\osc}{\mathrm{osc}}
\title[Lagrangian mean curvature type equations]{Hessian estimates for shrinkers, expanders, translators, and rotators of the Lagrangian Mean Curvature Flow
}
\author{Arunima Bhattacharya and Jeremy Wall}
\address{Department of Mathematics, Phillips Hall\\
 the University of North Carolina at Chapel Hill, NC }
\email{arunimab@unc.edu}
\address{Department of Mathematics, Phillips Hall\\
 the University of North Carolina at Chapel Hill, NC }
\email{jwall2@unc.edu}
\begin{document}

\maketitle

\begin{abstract}
In this paper, we prove interior Hessian estimates for shrinkers, expanders, translators, and rotators of the Lagrangian mean curvature flow under the assumption that the Lagrangian phase is hypercritical. We further extend our results to a broader class of Lagrangian mean curvature type equations. 
\end{abstract}

\section{Introduction}
A family of Lagrangian submanifolds $X(x,t):\re^n\times\re\to\mbb C^n$ evolves by \textit{Lagrangian mean curvature flow} if it solves
\eqal{
\label{LMCF}
(X_t)^\bot=\Delta_gX=\vec H,
} 
where $\vec H $ denotes the mean curvature vector of the Lagrangian submanifold. 
 The mean curvature vector of the Lagrangian submanifold $(x,Du(x))\subset \mbb C^n$ is determined by the Lagrangian angle or phase $\Theta$, by Harvey-Lawson \cite[Proposition 2.17]{HL}. The Lagrangian angle is given by 
 \begin{equation}\Theta=\sum_{i=1}^n \arctan \lambda_i, \label{sl}
 \end{equation}
 where $\lambda_i$ are the eigenvalues of the Hessian $D^2u$. This angle acts as the potential of the mean curvature vector 
\eqal{
\label{mean}
\vec H=J\nabla_g\Theta,
}
where $g=I_n+(D^2u)^2$ is the induced metric on $(x,Du(x))$, and $J$ is the almost complex structure on $\mbb C^n$. Thus, equation \eqref{sl} is the potential equation for prescribed \textit{Lagrangian mean curvature}. When the Lagrangian phase $\Theta$ is constant, $u$ solves the \textit{special Lagrangian equation} of Harvey-Lawson \cite{HL}. In this case, $H=0$, and $(x,Du(x))$ is a volume-minimizing Lagrangian submanifold.

After a change of coordinates, one can locally write $X(x,t)=(x,Du(x,t))$, such that $\Delta_gX=(J\bar\nabla\Theta(x,t))^\bot$, where $\bar\nabla=(\pd_x,\pd_y)$ is the ambient gradient. This means a local potential $u(x,t)$ evolves by the parabolic equation
\eqal{
\label{ut}
&u_t=\sum_{i=1}^n\arctan\lambda_i,\\
&u(x,0):=u(x).
}

Symmetry reductions of \eqref{LMCF} reduce \eqref{ut} to an elliptic equation for $u(x)$. This is illustrated, for instance, in the work of Chau-Chen-He \cite{CCH}. These solutions model singularities of the mean curvature flow.

If $u(x)$ solves
\eqal{
\label{s}
\sum_{i=1}^n\arctan\lambda_i=s_1+s_2(x\cdot Du(x)-2u(x)),
}
then $X(x,t)=\sqrt{1-2s_2t}\,(x,Du(x))$ is a \textit{shrinker} or \textit{expander} solution of \eqref{LMCF}, if $s_2>0$ or $s_2<0$, respectively. The mean curvature of the initial submanifold $(x,Du(x))$ is given by  $H=-s_2X^\bot$. Entire smooth solutions to \eqref{s} for $s_2>0$ are quadratic polynomials, by Chau-Chen-Yuan \cite{CCY}; see also Huang-Wang \cite{HW} for the smooth convex case. The circle $x^2+u'(x)^2=1$ is a closed example of a shrinker $s_2=1,s_1=0$ in one dimension. We refer the reader to the work of Joyce-Lee-Tsui \cite{JLT}, for other non-graphical examples.

If $u(x)$ solves
\eqal{
\label{tran}
\sum_{i=1}^n\arctan\lambda_i=t_1+t_2\cdot x+t_3\cdot Du(x),
}
then $X(x,t)=(x,Du(x))+t(-t_3,t_2)$ is a \textit{translator} solution of \eqref{LMCF}, with constant mean curvature $H=(-t_3,t_2)^\bot$.  For example, in one dimension, the grim reaper curve $(x,u'(x))=(x,-\ln\cos(x))$, for $t_2=1,t_3=t_1=0$.  Entire solutions to \eqref{tran} with Hessian bounds are quadratic polynomials, by Chau-Chen-He \cite{CCH}; see also Ngyuen-Yuan \cite{NY} for entire ancient solutions to \eqref{tran} with Hessian conditions.

The Hamiltonian vector field $A\cdot z=J\bar \nabla\Theta$ has a real potential given by $\Theta(x,y)=\frac{1}{2i}\langle z,A\cdot z\rangle_{\mbb C^n}$ if $A\in SU(n)$ is skew-adjoint.  Since $\exp(tA)\in U(n)$ preserves the symplectic form 
$dz\wedge d\bar z=\sum dz^i\wedge d\bar z^i$, the Hamiltonian flow $X(x,t)=\exp(tA)(x,Du(x))$ is a Lagrangian immersion with $X_t=AX=J\bar \nabla\Theta$.  For $A=r_2J$ and $\Theta(x,y)=r_1+\frac{r_2}{2}|z|^2$, if $u(x)$ solves
\eqal{
\label{rotator}
\sum_{i=1}^n\arctan\lambda_i=r_1+\frac{r_2}{2}(|x|^2+|Du(x)|^2),
}
then $X(x,t)=\exp(r_2tJ)(x,Du(x))$ is a \textit{rotator} solution of \eqref{LMCF}, with mean curvature $H=r_2(JX)^\bot$. The Yin-Yang curve of Altschuler \cite{Alt} is one such example in one dimension. We also refer the reader to the notes of Yuan \cite[pg. 3]{YNotes}.

A broader class of equations  of interest that generalize equations \eqref{s}, \eqref{tran}, \eqref{rotator}, among others, are the 
\textit{Lagrangian mean curvature type equations}
\eqal{
\label{slag}
\sum_{i=1}^n\arctan\lambda_i=\Theta(x,u(x),Du(x)).
}
 The study of Lagrangian mean curvature-type equations is driven by a geometric interest, particularly because of the notable special cases illustrated above; see \cite{Y20, BS2} for a detailed discussion.

 In this paper, we prove interior Hessian estimates for shrinkers, expanders, translators, and rotators of the Lagrangian mean curvature flow and further extend these results to the broader class of Lagrangian mean curvature-type equations. We assume the Lagrangian phase to be hypercritical, i.e. $|\Theta|\geq (n-1)\frac{\pi}{2}$. This results in the convexity of the potential of the initial Lagrangian submanifold. For certain $\Theta=\Theta(x)$, smooth convex solutions were constructed by Wang-Huang-Bao \cite{WHB1} satisfying $Du(\Omega_1)=\Omega_2$ for prescribed uniformly convex smooth domains $\Omega_i$, following Brendle-Warren \cite{BrW} for the constant $\Theta$ case; see also Huang \cite{H15} for a construction using Lagrangian mean curvature flow.\\

\noindent \textbf{Notations. }Before we present our main results, we clarify some terminology.
\begin{itemize}
\item[I.] By $B_R$ we denote a ball of radius $R$ centered at the origin.
\item[II.] We denote the oscillation of $u$ in $B_R$ by $\osc_{B_R}(u)$.
\item[III.] Let $\Gamma_R = B_R\times u(B_R)\times Du(B_R)\subset B_R\times\re\times\re^n$. Let $\nu_1,\nu_2$ be constants such that for $\Theta(x,z,p)$, we have the following structure conditions
\begin{align}
    |\Theta_x|,|\Theta_z|,|\Theta_p|&\leq \nu_1,\label{struct}\\
    |\Theta_{xx}|,|\Theta_{xz}|,|\Theta_{xp}|,|\Theta_{zz}|,|\Theta_{zp}| &\leq \nu_2 \nonumber
\end{align}
for all $(x,z,p)\in\Gamma_R$. In the above partial derivatives, the variables $x,z,p$ are treated as independent of each other. Observe that this indicates that the above partial derivatives do not have any $D^2u$ or $D^3u$ terms.
 \end{itemize}
\medskip

Our main results are the following:

\begin{theorem}\label{main0}
If $u$ is a $C^4$ solution of any of these equations: \eqref{s}, \eqref{tran}, and \eqref{rotator} on $B_{R}(0)\subset \mathbb{R}^{n}$ where $|\Theta|\geq (n-1)\frac{\pi}{2}$, then we have 
\begin{equation*}
    |D^2u(0)|\leq C_1\exp [C_2(\osc_{B_R}(u)/R^2)^{4n-2}]
\end{equation*}
where $C_1$ and $C_2$ are positive constants depending on $n$ and the following: 
\begin{itemize}
    \item[(1)] $s_2$ for \eqref{s}
    \item[(2)] $t_2,t_3$ for \eqref{tran}
    \item[(3)] $r_2$ for \eqref{rotator}.
\end{itemize}
\end{theorem}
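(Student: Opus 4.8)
The plan is to treat \eqref{s}, \eqref{tran}, and \eqref{rotator} uniformly as instances of the Lagrangian mean curvature type equation \eqref{slag} and to prove a single interior Hessian estimate for \eqref{slag} under the hypercritical hypothesis, with constants governed by the structure constants $\nu_1,\nu_2$ of \eqref{struct}; the theorem will then follow readily. The reductions are explicit. For \eqref{s}, $\Theta(x,z,p)=s_1+s_2(x\cdot p-2z)$, so $\Theta_x=s_2p$, $\Theta_z=-2s_2$, $\Theta_p=s_2x$, and $\Theta_{xp}=s_2I_n$ is the only nonzero second derivative; for \eqref{tran}, $\Theta=t_1+t_2\cdot x+t_3\cdot p$ has constant gradient and vanishing Hessian; for \eqref{rotator}, $\Theta=r_1+\frac{r_2}{2}(|x|^2+|p|^2)$ gives $\Theta_x=r_2x$, $\Theta_z=0$, $\Theta_p=r_2p$, $\Theta_{xx}=r_2I_n$. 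In all three cases the hypercritical condition $|\Theta|\ge(n-1)\frac{\pi}{2}$ forces, after replacing $u$ by $-u$ if necessary, each $\arctan\lambda_i>0$ — if some $\lambda_i\le 0$, the remaining $n-1$ terms would sum to less than $(n-1)\frac{\pi}{2}$, a contradiction — so $D^2u>0$ and $u$ is convex on $B_R$. The interior gradient bound for convex functions then gives $|Du|\le C(n)\,\osc_{B_R}(u)/R$ on $B_{R/2}$, and plugging this into the formulas above bounds $\nu_1,\nu_2$ on $\Gamma_{R/2}$ in terms of $n$, the relevant soliton constant ($s_2$; $t_2,t_3$; or $r_2$), $R$, and $\osc_{B_R}(u)$. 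Hence it suffices to prove the estimate for \eqref{slag}.

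For \eqref{slag} we work on the Lagrangian graph $M=\{(x,Du(x)):x\in B_{R/2}\}\subset\com^n$ with induced metric $g=I_n+(D^2u)^2$, volume form $dV_g=\sqrt{\det g}\,dx$, and Laplace--Beltrami operator $\Delta_g$. By \eqref{mean} and \eqref{struct} the mean curvature satisfies $|\vec H|=|\nabla_g\Theta|\le C(n)\,\nu_1(1+|D^2u|)$, so $M$ has controlled mean curvature. The core of the argument is a Jacobi-type inequality for $b=\log\sqrt{\det g}=\sum_i\log\sqrt{1+\lambda_i^2}$: differentiating \eqref{slag} twice along $M$, commuting derivatives, and using the convexity $\lambda_i>0$ to absorb the third-order terms (those built from $D^3u$, i.e. the second fundamental form of $M$) into a good gradient term, one arrives at an inequality of the schematic form
\eqal{
\Delta_g b \;\geq\; c(n)\,|\nabla_g b|^2 \;-\; C(n)\big(\nu_1^2+\nu_2\big)\big(1+|\nabla_g b|\big),
}
together with its analogue for $\log\sqrt{1+\lambda_{\max}^2}$ (or a smooth symmetric substitute), so that, up to the $\nu_1,\nu_2$-controlled error, $\lambda_{\max}=|D^2u|$ is a subsolution on $M$. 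This is exactly where hypercriticality enters: without $D^2u>0$ the third-order terms are uncontrollable.

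The estimate is then closed by De Giorgi--Nash--Moser iteration on $M$. The Michael--Simon Sobolev inequality holds on $M$ with a constant depending only on $n$ and on $\int_M|\vec H|\,dV_g$, which is controlled by $\nu_1$ and the convexity-bounded geometry; combined with the differential inequality and cutoffs supported in $B_R$, the iteration produces a pointwise bound for $b(0)$ — hence for $\log\lambda_{\max}(0)$ — by a small-exponent weighted integral of $\lambda_{\max}$ (equivalently of $\sqrt{\det g}$) over $B_R\cap M$. That integral is in turn controlled by $\osc_{B_R}(u)/R^2$: using the convexity of $u$ and repeatedly integrating by parts against \eqref{slag}, each step lowers a power of $\lambda_{\max}$ at the price of a factor $\osc_{B_R}(u)/R^2$ while shifting derivatives onto the geometric weights, and the number of steps needed to terminate the chain is what produces the exponent $4n-2$. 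Assembling these, $\log\lambda_{\max}(0)\le C_2\,(\osc_{B_R}(u)/R^2)^{4n-2}+\log C_1$, and since $|D^2u(0)|=\lambda_{\max}(0)$, exponentiating yields the stated bound with $C_1,C_2$ depending only on $n$ and the soliton constant of the equation at hand.

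The main obstacle is establishing the Jacobi-type inequality with a non-constant phase depending on $x$, $u$, and $Du$: $M$ is no longer minimal, so $\Delta_g$ of the coordinate functions does not vanish, and commuting derivatives past \eqref{slag} generates many error terms in $\Theta_x,\Theta_z,\Theta_p$ and their derivatives. One must show that every third-order contribution is absorbable via $\lambda_i>0$ and that the residual lower-order errors are at most quadratic in $|\nabla_g b|$, so that they neither flip the sign of the leading term nor obstruct the iteration. A secondary difficulty is calibrating the integration-by-parts chain and the Moser exponents against the Michael--Simon inequality and the mean-curvature error so that the final power of $\osc_{B_R}(u)/R^2$ is exactly $4n-2$ and the constants remain independent of $\osc_{B_R}(u)$ and $R$.
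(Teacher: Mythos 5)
Your reduction of the three soliton equations to the general phase \eqref{slag} via explicit computation of $\Theta_x,\Theta_z,\Theta_p$ and their second derivatives, the deduction of convexity from hypercriticality, and the Jacobi-type inequality for $b=\log\sqrt{\det g}$ all match the paper's skeleton (the soliton computations are essentially Corollaries \ref{scor}--\ref{rcor}, and the Jacobi inequality is Proposition \ref{PropLaplGrad} and Lemma \ref{Jpoint}). One caveat on the Jacobi step: absorbing the term $\frac{\lambda_a^3}{1+\lambda_a^2}\Theta_{u_au_a}$ requires partial convexity of $\Theta$ in $p$ in addition to $\lambda_i>0$; for the three solitons this is automatic (given the sign convention on $r_2$), but ``convexity absorbs the third-order terms'' alone does not close the argument for general $\Theta$.

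The genuine gap is in how you close the estimate. You propose Michael--Simon Sobolev plus De Giorgi--Nash--Moser on the unrotated graph $M$, followed by an ``integration-by-parts chain'' lowering powers of $\lambda_{\max}$ at the cost of factors of $\osc(u)/R^2$. Two problems. First, your stated mean curvature bound $|\vec H|\le C(n)\nu_1(1+|D^2u|)$ would make the Sobolev constant depend on the very quantity being estimated; the correct computation $|\vec H|_g^2=g^{ii}(\pd_i\Theta)^2\le C(\nu_1,n,\|Du\|_\infty)$ (the factors $g^{ii}$ cancel the $\lambda_i$ coming from $\Theta_{u_i}\lambda_i$) repairs this, but you did not do it. Second, and more seriously, the chain that bounds the weighted volume $\int\sqrt{\det g}\,dx$ by powers of $\osc(u)/R^2$ is the Warren--Yuan divergence-structure argument, which relies on the divergence identities for the elementary symmetric functions available for constant (or $x$-dependent) phase; when $\Theta$ depends on $Du$, differentiating the equation produces terms $\Theta_{u_k}u_{kij}$ that break those identities, and this is exactly the obstruction the paper identifies. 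The paper circumvents it with the Lewy--Yuan rotation: in rotated coordinates $-I\le D^2\bar u\le I$, so the operator is uniformly elliptic, a local maximum principle with explicitly tracked constants (Appendix Theorem \ref{locmvp}) replaces Michael--Simon, and the volume is bounded trivially by $\int_{B_{2n+1}}V\,dx=\int_{\bar\Omega_{2n+1}}\bar V\,d\bar x\le C(n)\rho^n$ with $\rho\le C(1+\|Du\|_\infty)$. Your proposal neither carries out the chain nor explains why it survives the gradient dependence, and the exponent $4n-2$ in the paper arises from compounding the $\osc$-dependence of $\rho$, of the Jacobi constant $C\sim 1+(\osc u)^2$, and of the drift bound $\ti C\sim 1+\osc u$ --- not from counting integration-by-parts steps. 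As written, the analytic core of your argument is an unproven sketch of precisely the step that fails for gradient-dependent phases.
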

\begin{remark}
In the case of equation \eqref{tran}, since there is no gradient dependence in the derivative of the phase, the precise estimate obtained is
    \[
    |D^2u(0)|\leq C_1\exp [C_2(\osc_{B_R}(u)/R^2)^{3n-2}] .
    \]
\end{remark}

\begin{theorem}\label{main1}
Suppose that $u$ is a $C^4$ solution of \eqref{slag} on $B_{R}(0)\subset\re^n$, where $|\Theta|\geq (n-1)\frac{\pi}{2}$, $\Theta(x,z,p)\in C^2(\Gamma_R)$ is partially convex in the $p$ variable, and satisfies the structure conditions given by \eqref{struct}. Then we have
\begin{equation*}
    |D^2u(0)|\leq C_1\exp[C_2(\osc_{B_{R}}(u)/R^2)^{4n-2}] 
\end{equation*}
where $C_1$ and $C_2$ are positive constants depending on $n$, $\nu_1$,  $\nu_2$.
\end{theorem}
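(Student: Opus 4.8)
The plan is to run the ``gradient bound $\Rightarrow$ Hessian bound'' scheme for Lagrangian‑type equations, in the spirit of Warren--Yuan and of the constant‑ and $x$‑dependent phase results, now for the fully variable phase $\Theta=\Theta(x,u,Du)$, with the partial convexity of $\Theta$ in $p$ furnishing the sign needed to absorb the new terms. By a standard rescaling (replacing $u$ by $R^{-2}u(Rx)$) it suffices to treat $R=1$, the constants depending on $n,\nu_1,\nu_2$; we treat the case $\Theta\geq (n-1)\tfrac{\pi}{2}$ (the case $\Theta\leq-(n-1)\tfrac{\pi}{2}$ being analogous, via the parallel computation). The hypercritical hypothesis is precisely what forces $D^2u>0$: if some eigenvalue $\lambda_j\leq 0$, then $\sum_{i\neq j}\arctan\lambda_i\geq (n-1)\tfrac{\pi}{2}$ while each of these $n-1$ terms is $<\tfrac{\pi}{2}$ --- a contradiction. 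Convexity then gives $\sup_{B_{1/2}}|Du|\leq 2\,\osc_{B_1}(u)$, so on the relevant compact part of $\Gamma_1$ every derivative of $\Theta$ appearing below is controlled by $\nu_1,\nu_2$. Write $M=\{(x,Du(x)):x\in B_1\}$ with induced metric $g=I_n+(D^2u)^2$ and Laplace--Beltrami operator $\Delta_g$, and recall that $\pd(\sum_k\arctan\lambda_k)/\pd u_{ij}=g^{ij}$; differentiating \eqref{slag} in the direction $e$ thus yields $g^{ij}u_{ije}=\Theta_{x_e}+\Theta_z u_e+\Theta_{p_k}u_{ke}$.

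Two preliminary estimates. First, $M$ has bounded mean curvature: $\vec H=J\nabla_g\Theta$ and $|\nabla_g\Theta|^2=g^{ij}(D_i\Theta)(D_j\Theta)\leq 3\bigl(|\Theta_x|^2+|Du|^2|\Theta_z|^2+|\Theta_p|^2\bigr)$, where we used $g\geq I_n$ together with the identity $g^{ij}u_{ki}u_{lj}=\operatorname{diag}\bigl(\lambda_k^2/(1+\lambda_k^2)\bigr)\leq I_n$ for the term arising from $\Theta_{p_k}u_{ki}$; hence $|\vec H|\leq C(n,\nu_1,\osc_{B_1}(u))$ and the Michael--Simon Sobolev inequality holds on $M$. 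Second --- and this is where convexity is essential --- $\sqrt{\det g}=\prod_i\sqrt{1+\lambda_i^2}\leq\prod_i(1+\lambda_i)=\det\bigl(D^2(u+\tfrac12|x|^2)\bigr)$, so by the area formula for the strictly convex function $\bar u:=u+\tfrac12|x|^2$,
\[
\operatorname{Vol}_g(M\cap B_r)=\int_{B_r}\sqrt{\det g}\,dx\leq\int_{B_r}\det D^2\bar u\,dx=\bigl|D\bar u(B_r)\bigr|\leq C(n)\bigl(1+\osc_{B_1}(u)\bigr)^n,\qquad r\leq\tfrac12 .
\]

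The core is a Jacobi‑type inequality. Let $b=\log\sqrt{1+\lambda_{\max}^2}$, a Lipschitz function on $M$, smooth where $\lambda_{\max}$ is simple (the inequality below is read in the barrier sense, which is all the iteration needs; equivalently one may work with the genuinely smooth comparable quantity $\tfrac1n\log\sqrt{\det g}$, since $\lambda_{\max}\leq\sqrt{\det g}\leq(1+\lambda_{\max}^2)^{n/2}$). I will establish
\[
\Delta_g b\;\geq\;\kappa(n)\,|\nabla_g b|^2-C_0,\qquad C_0=C_0\bigl(n,\nu_1,\nu_2,\osc_{B_1}(u)\bigr)>0,
\]
by differentiating \eqref{slag} twice, writing out $\Delta_g b$ in terms of $D^3u$, and regrouping. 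As in Warren--Yuan, it is the hypercriticality ($\lambda_i>0$) that makes the leading third‑order terms combine with a favourable sign and produce the $+\kappa|\nabla_g b|^2$; the genuinely new terms come from a further differentiation of the $\Theta_z u_e$ and $\Theta_{p_k}u_{ke}$ pieces of $D_e\Theta$. The only potentially harmful ones are quadratic in $D^2u$ with coefficient $\Theta_{p_kp_l}$, and here the partial convexity of $\Theta$ in $p$ makes $\Theta_{p_kp_l}u_{ik}u_{jl}\geq0$ enter with the helpful orientation; all remaining terms carry at most one factor of $D^3u$, or are built from quantities bounded by $\nu_1,\nu_2,|Du|$, and are absorbed into $\kappa|\nabla_g b|^2$ and $-C_0$ via Cauchy--Schwarz, using $g^{ij}u_{ki}u_{lj}\leq I_n$ once more. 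This bookkeeping --- isolating the third‑order terms and checking that every term created by the $x$‑, $z$‑, and $p$‑dependence of $\Theta$ either has the right sign or is of lower order --- is the step I expect to be the main obstacle. In the translator case \eqref{tran}, $D_e\Theta$ has no $u$‑dependence and a simpler $p$‑dependence, which is what eventually trims one factor of $n$ off the exponent.

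Finally, from the Jacobi inequality $w:=e^{\kappa b}=(1+\lambda_{\max}^2)^{\kappa/2}$ satisfies $\Delta_g w\geq-\kappa C_0 w$, so the De Giorgi--Nash--Moser mean value inequality on $M$ --- legitimate because of the Michael--Simon Sobolev inequality, the bound on $|\vec H|$, and the volume bound, with constants controlled by $\operatorname{Vol}_g(M\cap B_{1/2})$ --- gives $\sup_{M\cap B_{1/4}}w\leq C\,\operatorname{Vol}_g(M\cap B_{1/2})^{-1}\!\int_{M\cap B_{1/2}}w\,dV_g$. The last integral is controlled by combining the volume bound with $\log t\leq C_\e t^{\e}$ and a finite bootstrap over a chain of shrinking balls $B_{1/4}\subset\cdots\subset B_{1/2}$, which propagates the $\osc_{B_1}(u)$‑dependence of $\operatorname{Vol}_g$ and of $C_0$ in a controlled manner. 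Tracking these dependences through the iteration gives $|D^2u(0)|\leq\lambda_{\max}(0)+C\leq C_1\exp\bigl[C_2(\osc_{B_1}(u))^{4n-2}\bigr]$; undoing the rescaling restores $\osc_{B_R}(u)/R^2$ and proves Theorem \ref{main1}, the translator simplification replacing $4n-2$ by $3n-2$.
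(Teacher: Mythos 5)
Your skeleton is right and, through the Jacobi inequality, essentially coincides with the paper's: hypercriticality forces $D^2u>0$, convexity controls $|Du|$ by $\osc\,u$, and the pointwise inequality $\Delta_g b\ge \kappa(n)|\nabla_g b|^2-C_0$ for $b=\log\sqrt{\det g}$ holds with $C_0\sim C(n,\nu_1,\nu_2)(1+(\osc u)^2)$, the partial convexity of $\Theta$ in $p$ supplying exactly the sign of the $\Theta_{p_ap_a}\lambda_a^3/(1+\lambda_a^2)$ term as you say. The gap is in the closing step. After your mean value inequality you must bound $\int_{M\cap B_{1/2}}w\,dV_g$ with $w=e^{\kappa b}$, i.e. $\int V^{\kappa}\cdot V\,dx$ (or, applying the inequality to $b$ itself, $\int(\log V)\,V\,dx$). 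Neither is controlled by your volume bound $\int V\,dx\le C(1+\osc u)^n$: both carry an extra positive power of $V$, and $\log t\le C_\e t^\e$ merely converts $\int(\log V)V\,dx$ into $\int V^{1+\e}\,dx$, which is just as unbounded; a "bootstrap over shrinking balls" does not remove the extra weight. The missing ingredient is the \emph{integral} Jacobi inequality $\int_{B_r}|\nabla_g b|^2\,dv_g\le C\int_{B_1}dv_g$ (obtained by testing the pointwise inequality against a cutoff), which, combined with the Sobolev inequality $\|(b-\tilde b)^+\|_{L^{n/(n-1)}(dv_g)}\le C\|\nabla_g(b-\tilde b)^+\|_{L^1(dv_g)}$ and Cauchy--Schwarz, is what finally reduces everything to the unweighted volume. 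Without that step the argument does not close.

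Second, the statement you are proving is quantitative: the exponent $(\osc u)^{4n-2}$ requires the constants in the mean value inequality to depend \emph{polynomially} on $\osc u$, since both $C_0\sim(\osc u)^2$ and the mean curvature bound $\Lambda_0\sim \osc u$ feed into them. The Michael--Simon mean value inequality and a De Giorgi--Nash--Moser iteration on the non-minimal graph $M$ do not come with such explicit polynomial dependence off the shelf; this is precisely the obstruction the paper identifies and circumvents by the Lewy--Yuan rotation $\bar z=e^{-i\pi/4}z$, under which $-I\le D^2\bar u\le I$, the linearized operator becomes uniformly elliptic in a Euclidean ball, and a re-proved version of the local maximum principle \cite[Theorem 9.20]{GT} gives constants growing only like $(1+\osc u)^{n-1}$; the Sobolev inequality is likewise obtained from a relative isoperimetric inequality in the rotated coordinates rather than from Michael--Simon. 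So "tracking these dependences through the iteration" is not routine bookkeeping but the main technical content of the second half of the proof, and your proposal leaves it unaddressed.
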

\begin{remark}\label{sing}
    From the singular solutions constructed in \cite[(1.13)]{BS2}, it is evident that the Hessian estimates in  Theorem \ref{main1} will not hold without partial convexity of $\Theta$ in the gradient variable $Du$. 
\end{remark}

One application of the above results is that $C^0$ viscosity solutions to \eqref{s},\eqref{tran}, and \eqref{rotator} with $|\Theta|\geq (n-1)\frac{\pi}{2}$ are analytic inside the domain of the solution, as explained in Remark \ref{dbvp}. 

\smallskip

The concavity of the arctangent operator in \eqref{sl} is closely associated with the range of the Lagrangian phase. 
The phase $(n-2)\frac{\pi}{2}$ is called critical because the level set $\{ \lambda \in \mathbb{R}^n \vert \lambda$ satisfying $ \eqref{sl}\}$ is convex only when $|\Theta|\geq (n-2)\frac{\pi}{2}$ \cite[Lemma 2.2]{YY}. The arctangent operator is concave if $u$ is convex. The concavity of the level set is evident for $|\Theta|\geq (n-1)\frac{\pi}{2}$ since that implies $\lambda>0$, making $F$ concave. The phase $|\Theta|\geq (n-1)\frac{\pi}{2}$ is called hypercritical. The phase $|\Theta|\geq (n-2)\frac{\pi}{2}+\delta$ is called supercritical. The phase $|\Theta|\geq (n-2)\frac{\pi}{2}$ is called critical and supercritical. 
For solutions of the special Lagrangian equation with critical and supercritical phase $|\Theta|\geq (n-2)\frac{\pi}{2}$, Hessian estimates have been obtained by Warren-Yuan \cite{WY9,WY}, Wang-Yuan \cite{WaY}; see also Li \cite{Lcomp} for a compactness approach and Zhou \cite{ZhouHess} for estimates requiring Hessian constraints which generalize criticality. The singular $C^{1,\alpha}$ solutions to \eqref{sl} constructed by Nadirashvili-Vl\u{a}du\c{t} \cite{NV} and Wang-Yuan \cite{WdY} show that interior regularity is not possible for subcritical phases $|\Theta|<(n-2)\frac{\pi}{2}$, without an additional convexity condition, as in Bao-Chen \cite{BCconvex}, Chen-Warren-Yuan \cite{CWY}, and Chen-Shankar-Yuan \cite{CSY}, and that the Dirichlet problem is not classically solvable for arbitrary smooth boundary data.  
In \cite{MooneySavin}, viscosity solutions to \eqref{sl} that are Lipschitz but not $C^1$ were constructed.

If the Lagrangian phase varies $\Theta=\Theta(x)$, then there is less clarity. Hessian estimates for convex smooth solutions with $C^{1,1}$ phase $\Theta=\Theta(x)$ were obtained by Warren in \cite[Theorem 8]{WTh}. For $C^{1,1}$ supercritical phase, interior Hessian and gradient estimates were established by Bhattacharya in \cite{AB}. For  $C^{1,1}$ critical and supercritical phase, interior Hessian and gradient estimates were established by Bhattacharya \cite{AB, AB2d} and Bhattacharya-Mooney-Shankar \cite{BMS} (for $C^2$ phase) respectively. See also Lu \cite{Siyuan}. Recently in \cite{Zhou1}, Zhou established interior Hessian estimates for supercritical $C^{0,1}$ phase. For convex viscosity solutions, interior regularity was established for  $C^2$ phase by Bhattacharya-Shankar in \cite{BS1, BS2}. If $\Theta$ is merely in $C^{\alpha}$ and supercritical, counterexamples to Hessian estimates exist as shown in \cite{AB1}.

While our knowledge is still limited when it comes to the variable Lagrangian phase $\Theta(x)$, it narrows even further when the Lagrangian phase is dependent on both the potential and the gradient of the potential of the Lagrangian submanifold, i.e., $\Theta(x,u,Du)$. Applying the integral method of \cite{AB} to the current problem poses numerous challenges. For instance, establishing the Jacobi-type inequality becomes significantly more intricate due to the presence of the gradient term $Du$ in $\Theta$. Consequently, it is by no means a straightforward process to combine the derivatives of $\Theta$ into a single constant term as in \cite{AB}. 
Next, due to the presence of the gradient term in the phase, the Michael-Simon Sobolev inequality cannot be used to estimate the integral of the volume form by a weighted volume of the non-minimal Lagrangian graph. We circumvent this issue by using the Lewy-Yuan rotation \cite[p.122]{YY}, which is reminiscent of the technique used in \cite{CWY}. This rotation results in a uniformly elliptic Jacobi inequality on the rotated Lagrangian graph, which allows the use of a local maximum principle \cite[Theorem 9.20]{GT}. However, the constants appearing in our Jacobi inequality are dependent on the oscillation of the potential. Therefore we need an explicit dependence of the constants arising in the local maximum principle on $\osc (u)$. To address this, we state and prove a version of the local maximum principle \cite[Theorem 9.20]{GT} applied to our specific equation (see Appendix).
Next, rotating back to the original coordinates and keeping track of the constants appearing at each step, we bound the slope of the gradient graph $(x,Du(x))$ at the origin by an exponential function of the oscillation of $u$.   Note that since the Michael-Simon mean value \cite[Theorem 3.4]{MS} and Sobolev inequalities \cite[Theorem 2.1]{MS} are not employed, there is no explicit dependence on the mean curvature bound in our final estimate.

The critical and supercritical phase case $|\Theta|\geq (n-2)\frac{\pi}{2}$ introduces new challenges requiring new techniques, which we present along with the supercritical phase case $|\Theta|\geq (n-2)\frac{\pi}{2}+\delta$ in forthcoming work \cite{BWall}. \\

\noindent \textbf{Acknowledgments.} AB is grateful to Y. Yuan for helpful discussions.
AB acknowledges
the support of the Simons Foundation grant MP-TSM-00002933 and funding provided by the Bill Guthridge distinguished professorship fund. JW acknowledges
the support of the NSF RTG DMS-2135998 grant.

\section{Preliminaries}

For the convenience of the readers, we recall some preliminary results. We first introduce some notations that will be used in this paper.
The induced Riemannian metric on the Lagrangian submanifold $X=(x,Du(x))\subset \mathbb{R}^n\times\mathbb{R}^n$ is given by
\[g=I_n+(D^2u)^2 .
\]
We denote
 \begin{align*} 
    \partial_i=\frac{\partial}{\partial x_i} \text{ , }
     \partial_{ij}=\frac{\partial^2}{\partial x_i\partial x_j} \text{ , }
     u_i=\partial_iu \text{ , }
    u_{ij}=\partial_{ij}u.
    \end{align*}
  Note that for the functions defined below, the subscripts on the left do not represent partial derivatives\begin{align*}
    h_{ijk}=\sqrt{g^{ii}}\sqrt{g^{jj}}\sqrt{g^{kk}}u_{ijk},\quad
    g^{ii}=\frac{1}{1+\lambda_i^2}.
    \end{align*}
Here $(g^{ij})$ is the inverse of the matrix $g$ and $h_{ijk}$ denotes the second fundamental form when the Hessian of $u$ is diagonalized.
The volume form, gradient, and inner product with respect to the metric $g$ are given by
\begin{align*}
    dv_g=\sqrt{\det g}dx &= Vdx \text{ , }\qquad
    \nabla_g v=g^{ij}v_iX_j,\\
    \langle\nabla_gv,\nabla_g w\rangle_g &=g^{ij}v_iw_j \text{ , }\quad
    |\nabla_gv|^2=\langle\nabla_gv,\nabla_g v\rangle_g.
\end{align*}

Next, we derive the Laplace-Beltrami operator on the non-minimal submanifold $(x, Du(x))$. Taking variations of the energy functional $\int |\nabla_g v|^2 dv_g$ with respect to $v$, one gets the Laplace-Beltrami operator of the metric $g$:
\begin{align}
\Delta_g  =\frac{1}{\sqrt{ g}}\partial_i(\sqrt{ g}g^{ij}\partial_j )
&=g^{ij}\partial_{ij}+\frac{1}{\sqrt{g}}\partial_i(\sqrt{g}g^{ij})\partial_j \label{2!}\\
&=g^{ij}\partial_{ij}-g^{jp}u_{pq}(\pd_q\Theta)  \partial_j. \nonumber
\end{align} The last equation follows from the following computation:

\begin{align}
    \frac{1}{\sqrt{g}}\partial_i(\sqrt{g}g^{ij})&=\frac{1}{\sqrt{g}}\partial_i(\sqrt{g})g^{ij}+\partial_ig^{ij} \nonumber \\
    &=\frac{1}{2}(\partial_i \ln g)g^{ij}+\partial_kg^{kj}\nonumber\\
    &=\frac{1}{2}g^{kl}\partial_i g_{kl}g^{ij}-g^{kl}\partial_k g_{lb}g^{bj}\nonumber\\
    &=-g^{jp}g^{ab}u_{abq}u_{pq}
    =-g^{jp} u_{pq}\pd_q\Theta\label{lolz}
\end{align}
where the last equation follows from \eqref{111} and \eqref{linearize} below.
The first
derivative of the metric $g$ is given by 
\begin{align}
    \partial_i g_{ab}=\partial_i(\delta_{ab}+u_{ak}u_{kb})=u_{aik}u_{kb}+u_{bik}u_{ka}\overset{\text{at } x_0}{=}u_{abi}(\lambda_a+\lambda_b), \label{111}
    \end{align}
    assuming the Hessian of $u$ is diagonalized at $x_0$.
On taking the gradient of both sides of the Lagrangian mean curvature type equation \eqref{slag}, we get
\begin{equation}
\sum_{a,b=1}^{n}g^{ab}u_{jab}=\pd_j\Theta (x,u(x),Du(x)).\label{linearize}
\end{equation}

For the general phase $\Theta(x,u(x),Du(x))$, assuming the Hessian $D^2u$ is diagonalized at $x_0$, we get
\begin{align}
    \pd_i \Theta(x,u(x),Du(x)) &= \Theta_{x_i} + \Theta_u u_i + \sum_{k=1}^n \Theta_{u_k}u_{ki} \label{dipsi}\\
    &\overset{x_0}{=} \Theta_{x_i} + \Theta_u u_i + \Theta_{u_i}\lambda_i. \label{dipsi@p}
\end{align}

So from \eqref{dipsi@p} and \eqref{mean}, we get, at the point $x_0\in B_R$,
\begin{align}
|\vec H|_g^2=g^{ii}(\pd_i\Theta)^2 &=g^{ii}\bigg(\Theta_{x_i}^2 + \Theta_u^2u_i^2 + \Theta_{u_i}^2\lambda_i^2 + 2\Theta_{x_i}\Theta_{u}u_i + 2\Theta_{x_i}\Theta_{u_i}\lambda_i +2\Theta_{u}\Theta_{u_i}u_i\lambda_i\bigg)\nonumber\\
&\leq 3g^{ii}\bigg(\Theta_{x_i}^2 + \Theta_u^2u_i^2 + \Theta_{u_i}^2\lambda_i^2\bigg)\nonumber\\
&\leq C(\nu_1, n, \osc_{B_{R+1}}(u)).\nonumber
\end{align}

Taking the $j$-th partial derivative of \eqref{dipsi}, we get
\begin{align}
    \pd_{ij}\Theta(x,u(x),Du(x)) &= \Theta_{x_ix_j} + \Theta_{x_i u}u_j + \sum_{r=1}^n \Theta_{x_iu_r}u_{rj}\nonumber\\
    & \qquad +\left(\Theta_{ux_j} + \Theta_{uu}u_j + \sum_{s=1}^n \Theta_{u u_s}u_{sj} \right)u_i + \Theta_u u_{ij}\nonumber\\
    & \qquad +\sum_{k=1}^n \left(\Theta_{u_kx_j} + \Theta_{u_ku}u_j + \sum_{\ell=1}^n \Theta_{u_ku_\ell}u_{\ell j}\right)u_{ki}+\sum_{k=1}^n \Theta_{u_k}u_{kij}\nonumber\\
    & \overset{x_0}{=} \Theta_{x_ix_j} + \Theta_{x_i u}u_j + \Theta_{x_iu_j}\lambda_j\label{dijpsi@p}\\
    & \qquad +\left(\Theta_{ux_j} + \Theta_{uu}u_j + \Theta_{u u_j}\lambda_j \right)u_i + \Theta_u \lambda_i\delta_{ij}\nonumber\\
    & \qquad +\left(\Theta_{u_ix_j} + \Theta_{u_iu}u_j + \Theta_{u_iu_j}\lambda_j\right)\lambda_i + \sum_{k=1}^n \Theta_{u_k}u_{kij}.\nonumber
\end{align}

Observe that when $\Theta$ is constant, one can choose harmonic co-ordinates $\Delta_g x=0$, which 
 reduces the Laplace-Beltrami operator on the minimal submanifold $\{(x,Du(x))|x\in B_R(0)\}$ to the linearized operator of \eqref{sl} at $u$.

\medskip

\section{The slope as a subsolution to a fully nonlinear PDE}

In this section, we prove a Jacobi-type inequality for the slope of the gradient graph $(x,Du(x))$, i.e., we show that a certain function of the slope of the gradient graph $(x,Du(x))$ is almost strongly subharmonic.

\begin{proposition}\label{PropLaplGrad}
Let $u$ be a $C^4$ convex solution of \eqref{slag} in $\mathbb{R}^{n}$. Suppose that the Hessian $D^{2}u$ is diagonalized at point $x_0$. Then we have the following at $x_0$
\begin{equation*}
     \frac{1}{n}|\nabla_g \log\sqrt{\det g}|^2_g\leq \sum_{i=1}^n\lambda_i^2h_{iii}^2 + \sum_{i\neq j} \lambda_j^2h_{jji}^2
\end{equation*} and
\begin{align}
    \Delta_g \log\sqrt{\det g} &\overset{x_0}{=}\sum_{i=1}^n(1 + \lambda_i^2)h_{iii}^2 + \sum_{j\neq i}(3 + \lambda_j^2 + 2\lambda_i\lambda_j)h_{jji}^2\nonumber\\
    &\qquad + 2\sum_{i<j<k}(3 + \lambda_i\lambda_j + \lambda_j\lambda_k + \lambda_k\lambda_i)h_{ijk}^2\nonumber\\
    &\quad +\sum_{i=1}^n g^{ii}\lambda_i\pd_{ii}\Theta - \sum_{i=1}^n g^{ii}\lambda_i(\pd_i\Theta) \pd_i\log\sqrt{\det g}.\nonumber
\end{align}
\end{proposition}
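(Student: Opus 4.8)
The plan is to compute $\Delta_g \log\sqrt{\det g}$ directly by differentiating twice, using the diagonalization of $D^2u$ at $x_0$ and the formula \eqref{2!} for $\Delta_g$. First I would write $\log\sqrt{\det g} = \frac12\log\det g$ and use $\partial_j \log\det g = g^{ab}\partial_j g_{ab}$, which by \eqref{111} equals $\sum_a \frac{2\lambda_a}{1+\lambda_a^2} u_{aaj}$ at $x_0$ (the off-diagonal terms of $\partial_j g_{ab}$ drop out against the diagonal $g^{ab}$). Rewriting in terms of the normalized third derivatives $h_{ijk}$, this gives $\partial_j \log\sqrt{\det g} = \sum_a \lambda_a \sqrt{g^{aa}}\, h_{aaj}$ at $x_0$, from which the gradient bound $\frac1n|\nabla_g\log\sqrt{\det g}|_g^2 = \frac1n\sum_j g^{jj}\big(\sum_a\lambda_a\sqrt{g^{aa}}h_{aaj}\big)^2$ follows by Cauchy–Schwarz (bounding the square of the sum over $a$ by $n$ times the sum of squares, then noting $g^{aa}\le 1$ so $g^{jj}g^{aa}\le g^{aa}$; more carefully, splitting $a=j$ versus $a\ne j$ reproduces exactly $\sum_i\lambda_i^2 h_{iii}^2 + \sum_{i\ne j}\lambda_j^2 h_{jji}^2$).

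For the Laplacian identity, I would apply $\Delta_g = g^{ij}\partial_{ij} - g^{jp}u_{pq}(\partial_q\Theta)\partial_j$ to $\frac12\log\det g$. The first-order (transport) term immediately produces $-\sum_i g^{ii}\lambda_i(\partial_i\Theta)\,\partial_i\log\sqrt{\det g}$ at $x_0$ after diagonalizing. For the second-order term $\frac12 g^{ij}\partial_{ij}\log\det g$, I would differentiate $\partial_j\log\det g = g^{ab}\partial_j g_{ab}$ once more to get $\partial_{ij}\log\det g = (\partial_i g^{ab})\partial_j g_{ab} + g^{ab}\partial_{ij}g_{ab}$, contract with $\frac12 g^{ij}$ (only $i=j$ survives at $x_0$), and expand. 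The term $\partial_{ij}g_{ab}$ produces, via a second differentiation of $g_{ab}=\delta_{ab}+u_{ak}u_{kb}$, both a genuine fourth-derivative piece $u_{abij}$ and a product piece $u_{aik}u_{bjk}+u_{ajk}u_{bik}$; the fourth-derivative piece, contracted through $g^{ii}g^{ab}$, is where the linearized equation \eqref{linearize} (equivalently \eqref{lolz}) is invoked: $g^{ab}u_{abij} = \partial_{ij}\Theta - (\partial_i g^{ab})u_{abj}$ after commuting derivatives, and the resulting $\partial_{ij}\Theta$ term gives the $\sum_i g^{ii}\lambda_i\partial_{ii}\Theta$ contribution while the correction combines with the $(\partial_i g^{ab})\partial_j g_{ab}$ terms. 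Collecting all the purely-cubic contributions and re-indexing into the three symmetric groups (the $h_{iii}^2$ terms, the $h_{jji}^2$ terms with $j\ne i$, and the fully off-diagonal $h_{ijk}^2$ terms with $i<j<k$) should reproduce the stated coefficients $1+\lambda_i^2$, $3+\lambda_j^2+2\lambda_i\lambda_j$, and $3+\lambda_i\lambda_j+\lambda_j\lambda_k+\lambda_k\lambda_i$.

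I expect the main obstacle to be the bookkeeping in the second-order term: correctly tracking which index contractions survive at $x_0$, keeping the off-diagonal entries of $\partial_i g_{ab}$ (which vanish at $x_0$ only for $a=b$, not in $\partial_i\partial_j g_{ab}$), and in particular the combinatorial regrouping of all the $u_{\bullet\bullet\bullet}u_{\bullet\bullet\bullet}$ products into the $h_{ijk}^2$ basis with the right multiplicities — the factors of $3$ and the cross terms $2\lambda_i\lambda_j$, $\lambda_i\lambda_j+\lambda_j\lambda_k+\lambda_k\lambda_i$ come precisely from counting how many (permutation-equivalent) index patterns collapse onto a given unordered triple. This computation is a variable-phase analogue of the classical minimal-surface identity (the constant-$\Theta$ case, where the last two terms vanish and one has harmonic coordinates), so I would cross-check the cubic part against that known formula, e.g. as in Warren–Yuan, and then verify that the only new contributions are exactly the two phase-dependent terms $\sum_i g^{ii}\lambda_i\partial_{ii}\Theta$ and $-\sum_i g^{ii}\lambda_i(\partial_i\Theta)\partial_i\log\sqrt{\det g}$, with the $\partial_{ij}\Theta$ here denoting the total derivative computed in \eqref{dijpsi@p}.
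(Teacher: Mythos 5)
Your proposal is correct and follows essentially the same route as the paper: compute $\partial_i\log\sqrt{\det g}=\sum_a g^{aa}\lambda_a u_{aai}$ and apply Cauchy--Schwarz for the gradient bound, then split $\Delta_g$ into its second-order part (expanded via $(\partial_i g^{ab})\partial_j g_{ab}+g^{ab}\partial_{ij}g_{ab}$, with the fourth derivatives eliminated by differentiating the equation) and the first-order transport part, and regroup the cubic terms into the $h_{ijk}^2$ basis. The only blemish is a normalization slip in your intermediate formula $\partial_j\log\sqrt{\det g}=\sum_a\lambda_a\sqrt{g^{aa}}\,h_{aaj}$ (it should carry $(g^{jj})^{-1/2}$ rather than $\sqrt{g^{aa}}$), which does not affect the stated conclusion.
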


\begin{proof}
We compute some derivatives of the metric $g$. We have 
\begin{align}
    \pd_j g_{ab} &= \sum_{k=1}^n(u_{akj}u_{kb} + u_{ak}u_{kbj})\nonumber\\
    & \overset{x_0}{=} u_{abj}(\lambda_a + \lambda_b )\label{djg_ab@p}
\end{align}
and

\begin{align}
    \pd_ig^{ab} &= -g^{ak}\pd_ig_{kl}g^{lb}\nonumber\\
    & \overset{x_0}{=} -g^{aa}\pd_ig_{ab}g^{bb}\nonumber\\
    & \overset{x_0}{=} -g^{aa}g^{bb}u_{abi}(\lambda_a + \lambda_b ).\label{dig^ab@p}
\end{align}

Hence 
\begin{align}
    \pd_{ij}g_{ab} &= \sum_{k=1}^n(u_{akji}u_{kb} + u_{akj}u_{kbi} + u_{aki}u_{kbj} + u_{ak}u_{kbij})\nonumber\\
    & \overset{x_0}{=} u_{abji}(\lambda_a + \lambda_b ) +\sum_{k=1}^n(u_{akj}u_{kbi} + u_{aki}u_{kbj}). \nonumber
\end{align}

In order to substitute the 4th order derivatives, we take the partial derivative of \eqref{linearize} and get
\begin{align}
    \sum_{i,j=1}^ng^{ij}u_{ijk\ell} &= \pd_{k\ell}\Theta - \sum_{i,j=1}^n\pd_\ell g^{ij} u_{ijk}\nonumber\\
    &\overset{x_0}{=} \pd_{k\ell}\Theta + \sum_{i,j=1}^ng^{ii}g^{jj}u_{ij\ell }u_{ijk}(\lambda_i + \lambda_j ).\nonumber
\end{align}

Thus, we have
\begin{equation}\label{gD2g}
\sum_{i,j=1}^ng^{ij}\pd_{ij}g_{ab} \overset{x_0}{=} (\lambda_a + \lambda_b)\pd_{ab}\Theta + \sum_{i,j=1}^ng^{ii}g^{jj}u_{ija}u_{ijb}(\lambda_i + \lambda_j )(\lambda_a + \lambda_b) + \sum_{i,k=1}^n2g^{ii}u_{aki}u_{bki}.
\end{equation}

Next, we compute the norm of the gradient:
\begin{align}
    \frac{1}{n}|\nabla_g \log\sqrt{\det g}|^2_g &\overset{x_0}{=}\sum_{i=1}^n\frac{1}{n}g^{ii}\left(\pd_i\log\sqrt{\det g}\right)^2\nonumber\\
    &\overset{x_0}{=}\sum_{i=1}^n\frac{1}{n}g^{ii}\left(\sum_{a,b=1}^n\frac{1}{2}g^{ab}\pd_ig_{ab}\right)^2\nonumber\\
    &\overset{x_0}{=}\sum_{i=1}^n\frac{1}{n}g^{ii}\left(\sum_{a,b=1}^n\frac{1}{2}g^{ab}u_{abi}(\lambda_a + \lambda_b)\right)^2 \quad\text{from \eqref{djg_ab@p}}\nonumber\\
    &\overset{x_0}{=}\sum_{i=1}^n\frac{1}{n}g^{ii}\left(\sum_{a=1}^n g^{aa}u_{aai}\lambda_a\right)^2\label{dilogdetg@p}\\
    &\leq\sum_{i,a=1}^n g^{ii}(g^{aa})^2u_{aai}^2\lambda_a^2\nonumber\\
    &\overset{x_0}{=}\sum_{i,a=1}^n h_{aai}^2\lambda_a^2\nonumber\\
    &\overset{x_0}{=} \sum_{i=1}^n\lambda_i^2h_{iii}^2 + \sum_{i\neq j} \lambda_j^2h_{jji}^2.\nonumber
\end{align}
From here, we need to calculate $\Delta_g \log\sqrt{\det g}$, where again, the Laplace-Beltrami operator takes the form of \eqref{2!}. From the above calculations, we observe that 
\begin{align}
    \sum_{i,j=1}^ng^{ij}\pd_{ij}\log\sqrt{\det g} &= \sum_{i,j=1}^ng^{ij}\pd_j\left(\frac{1}{\sqrt{\det g}}\frac{1}{2\sqrt{\det g}}\pd_i\det g \right)\nonumber\\
    &=\sum_{i,j,a,b=1}^ng^{ij}\pd_j\left(\frac{1}{2\det g}\det g\; g^{ab}\pd_i g_{ab} \right)\nonumber\\
    &=\sum_{i,j,a,b=1}^ng^{ij} \frac{1}{2}\pd_j\left(g^{ab}\pd_i g_{ab}\right)\nonumber\\
    &=\sum_{i,j,a,b=1}^n g^{ij}\frac{1}{2}\left((\pd_j g^{ab})\pd_ig_{ab} + g^{ab}\pd_{ij}g_{ab} \right).\label{dijlogdetg}
\end{align}
Using \eqref{djg_ab@p} and \eqref{dig^ab@p}, we see that the first term of \eqref{dijlogdetg} becomes
\begin{align}
    \sum_{i,j,a,b=1}^n\frac{1}{2}g^{ij}(\pd_j g^{ab})\pd_ig_{ab} 
    \overset{x_0}{=}-\frac{1}{2}\sum_{i,a,b=1}^n g^{ii}g^{aa}g^{bb}u_{abi}^2(\lambda_a + \lambda_b )^2.\label{ldg1}
\end{align}

Using \eqref{gD2g}, the second term of \eqref{dijlogdetg} becomes
\begin{align}
    \sum_{i,j,a,b=1}^n\frac{1}{2}g^{ij}g^{ab}\pd_{ij}g_{ab} 
    \overset{x_0}{=} \sum_{a=1}^n g^{aa}\lambda_a\pd_{aa}\Theta + \sum_{i,j,a=1}^n g^{aa}g^{ii}g^{jj}u_{ija}^2(\lambda_i + \lambda_j)\lambda_a + \sum_{i,k,a=1}^n g^{aa}g^{ii}u_{aki}^2.\label{ldg2}
\end{align}

Combining \eqref{ldg1} and \eqref{ldg2}, we get
\begin{align}
    \sum_{i,j=1}^n g^{ij}\pd_{ij}\log\sqrt{\det g} &\overset{x_0}{=} \sum_{a=1}^n g^{aa}\lambda_a\pd_{aa}\Theta +\sum_{i,j,a=1}^n g^{aa}g^{ii}g^{jj}u_{ija}^2(\lambda_i + \lambda_j)\lambda_a + \sum_{i,k,a=1}^n g^{aa}g^{ii}u_{aki}^2\nonumber\\
    &\qquad-\frac{1}{2} \sum_{i,a,b=1}^n g^{ii}g^{aa}g^{bb}u_{abi}^2 (\lambda_a + \lambda_b )^2\nonumber\\
    &\overset{x_0}{=} \sum_{a=1}^n g^{aa}\lambda_a\pd_{aa}\Theta + \sum_{a,b,c=1}^n g^{aa}g^{bb}g^{cc}u_{abc}^2(\lambda_b + \lambda_c)\lambda_a \nonumber\\
    &\qquad+ \sum_{a,b,c=1}^n g^{aa}g^{bb}g^{cc}u_{abc}^2(1 + \lambda_c^2) -\frac{1}{2} \sum_{a,b,c=1}^n g^{aa}g^{bb}g^{cc}u_{abc}^2 (\lambda_a + \lambda_b )^2\nonumber\\
    &\overset{x_0}{=}\sum_{a=1}^n g^{aa}\lambda_a\pd_{aa}\Theta + \sum_{a,b,c=1}^n h_{abc}^2(1+\lambda_b\lambda_c)\nonumber\\
    &\overset{x_0}{=}\sum_{i=1}^n g^{ii}\lambda_i\pd_{ii}\Theta + \sum_{i=1}^n(1 + \lambda_i^2)h_{iii}^2 + \sum_{j\neq i}(3 + \lambda_j^2 + 2\lambda_i\lambda_j)h_{jji}^2\nonumber\\
    &\qquad + 2\sum_{i<j<k}(3 + \lambda_i\lambda_j + \lambda_j\lambda_k + \lambda_k\lambda_i)h_{ijk}^2.\nonumber
\end{align}

Altogether, we get
\begin{align}
    \Delta_g \log\sqrt{\det g} &\overset{x_0}{=}\sum_{i=1}^n(1 + \lambda_i^2)h_{iii}^2 + \sum_{j\neq i}(3 + \lambda_j^2 + 2\lambda_i\lambda_j)h_{jji}^2\nonumber\\
    &\qquad + 2\sum_{i<j<k}(3 + \lambda_i\lambda_j + \lambda_j\lambda_k + \lambda_k\lambda_i)h_{ijk}^2\nonumber\\
    &\quad +\sum_{i=1}^n g^{ii}\lambda_i\pd_{ii}\Theta - \sum_{i=1}^n g^{ii}\lambda_i(\pd_i\Theta)\pd_i\log\sqrt{\det g}. \nonumber
\end{align}
\end{proof}

\begin{lemma}\label{Jpoint}
Let $u$ be a $C^4$ convex solution of \eqref{slag} in $B_2(0)\subset\mathbb{R}^{n}$ where $\Theta(x,z,p)\in C^2(\Gamma_2)$ is partially convex in the $p$ variable and satisfies \eqref{struct}. Suppose that the Hessian $D^{2}u$ is diagonalized at $x_0\in B_1(0)$. Then at $x_0$, the function $\log\sqrt{\det g}$ satisfies 
\begin{equation}\Delta_g \log\sqrt{\det g}\geq c(n)|\nabla_g\log\sqrt{\det g}|^2-C\label{J}
\end{equation}
where $C=C(n,\nu_1,\nu_2)(1 + (\osc_{B_2}(u))^2)$.
\end{lemma}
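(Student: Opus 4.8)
The plan is to start from the exact identity for $\Delta_g \log\sqrt{\det g}$ proven in Proposition~\ref{PropLaplGrad} and show that, under hypercriticality and partial convexity of $\Theta$ in $p$, the third-derivative terms dominate a multiple of $|\nabla_g\log\sqrt{\det g}|^2$ up to a controlled additive error. Write $b = \log\sqrt{\det g}$ for brevity. The identity reads
\begin{equation*}
\Delta_g b \overset{x_0}{=} \underbrace{\sum_i(1+\lambda_i^2)h_{iii}^2 + \sum_{j\neq i}(3+\lambda_j^2+2\lambda_i\lambda_j)h_{jji}^2 + 2\sum_{i<j<k}(3+\lambda_i\lambda_j+\lambda_j\lambda_k+\lambda_k\lambda_i)h_{ijk}^2}_{=:Q} + \sum_i g^{ii}\lambda_i\,\pd_{ii}\Theta - \sum_i g^{ii}\lambda_i(\pd_i\Theta)\,\pd_i b.
\end{equation*}
Since $|\Theta|\geq (n-1)\tfrac\pi2$ forces all $\lambda_i>0$, every coefficient in $Q$ is positive (for the mixed $i<j<k$ terms, $3+\lambda_i\lambda_j+\lambda_j\lambda_k+\lambda_k\lambda_i>3>0$; for the $h_{jji}^2$ terms, $3+\lambda_j^2+2\lambda_i\lambda_j>0$). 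Comparing $Q$ with the gradient bound $\tfrac1n|\nabla_g b|^2 \leq \sum_i \lambda_i^2 h_{iii}^2 + \sum_{i\neq j}\lambda_j^2 h_{jji}^2$ from the first part of Proposition~\ref{PropLaplGrad}, we see that term-by-term $(1+\lambda_i^2)\geq \lambda_i^2$ and $(3+\lambda_j^2+2\lambda_i\lambda_j)\geq \lambda_j^2$ (again using $\lambda_i,\lambda_j>0$), so in fact $Q \geq \tfrac1n|\nabla_g b|^2$, and one even has slack: $Q \geq \tfrac1n|\nabla_g b|^2 + \sum_i h_{iii}^2 + 3\sum_{j\neq i}h_{jji}^2 + 6\sum_{i<j<k}h_{ijk}^2$. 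So after choosing $c(n)=\tfrac1n$ we are left to absorb the two remaining terms into $-C$.

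First I would bound $\sum_i g^{ii}\lambda_i\,\pd_{ii}\Theta$. Using $g^{ii}\lambda_i = \lambda_i/(1+\lambda_i^2)\leq \tfrac12$ and the expansion \eqref{dijpsi@p} of $\pd_{ii}\Theta$ at $x_0$, all the pure-$(x,z,p)$ second derivatives of $\Theta$ are bounded by $\nu_2$ (times factors of $|u_i|$, which are controlled by $\osc_{B_2}(u)$ via interior gradient estimates, and times $\lambda_i$); the crucial point is that the terms carrying $\lambda_i^2$ or $\lambda_i$ come with coefficient $g^{ii}\lambda_i\Theta_{u_iu_i}\lambda_i = \tfrac{\lambda_i^2}{1+\lambda_i^2}\Theta_{u_iu_i}$ and $g^{ii}\lambda_i^2\Theta_u = \tfrac{\lambda_i^2}{1+\lambda_i^2}\Theta_u$, both bounded. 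The only genuinely dangerous piece is $\sum_i g^{ii}\lambda_i\sum_k\Theta_{u_k}u_{kii} = \sum_{i,k}g^{ii}\lambda_i\Theta_{u_k}\sqrt{1+\lambda_k^2}\,h_{kii}$, which is first order in the third derivatives; but this is precisely where partial convexity of $\Theta$ in $p$ enters. Rather than estimating that sum directly, I would instead feed it back: observe that $\sum_i g^{ii}\lambda_i\pd_{ii}\Theta$ minus the bad part equals a term of the form $\sum_i g^{ii}\lambda_i \Theta_{u_iu_i}\lambda_i^2 g^{ii} h_{iii}\cdots$ — more cleanly, I would keep the full $\pd_{ii}\Theta$ together and note that the contribution $\sum_i g^{ii}\lambda_i \Theta_{u_iu_i}\lambda_i$ from the diagonal $p$-$p$ Hessian of $\Theta$ is $\geq 0$ by partial convexity (after diagonalizing, $\Theta_{u_iu_i}\geq 0$ along the diagonal directions of $D^2u$ — this needs care, since partial convexity gives $\Theta_{pp}\geq 0$ as a matrix, hence all diagonal entries in any basis are nonnegative), so it can be dropped from a lower bound; all other contributions to $\pd_{ii}\Theta$ are bounded by $C(n,\nu_1,\nu_2)(1+(\osc_{B_2}u)^2)$ after using $g^{ii}\lambda_i\leq\tfrac12$, $g^{ii}\lambda_i^2\leq 1$.

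The remaining obstacle, and the heart of the lemma, is the drift term $-\sum_i g^{ii}\lambda_i(\pd_i\Theta)\,\pd_i b$ together with the leftover first-order-in-$h$ pieces. Here I would use \eqref{dipsi@p}: $\pd_i\Theta = \Theta_{x_i}+\Theta_u u_i + \Theta_{u_i}\lambda_i$, so $g^{ii}\lambda_i\pd_i\Theta = \tfrac{\lambda_i}{1+\lambda_i^2}(\Theta_{x_i}+\Theta_u u_i) + \tfrac{\lambda_i^2}{1+\lambda_i^2}\Theta_{u_i}$, which is bounded by $C(n,\nu_1,\osc_{B_2}u)$ uniformly. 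Then $|\sum_i g^{ii}\lambda_i(\pd_i\Theta)\pd_i b| \leq C\sum_i|\pd_i b|$, and since $|\pd_i b| = \sqrt{1+\lambda_i^2}\,|g^{ii}\pd_i b|\cdot\frac{1}{\sqrt{1+\lambda_i^2}}$... more directly, by Cauchy-Schwarz in the $g$-metric, $\bigl|\sum_i g^{ii}\lambda_i(\pd_i\Theta)\pd_i b\bigr| \leq \bigl(\sum_i g^{ii}\lambda_i^2(\pd_i\Theta)^2\bigr)^{1/2}|\nabla_g b|_g \leq C|\nabla_g b|_g$, and then Young's inequality $C|\nabla_g b|_g \leq \tfrac{1}{2n}|\nabla_g b|_g^2 + \tfrac{n}{2}C^2$ absorbs it into the gradient term (shrinking $c(n)$ from $\tfrac1n$ to $\tfrac{1}{2n}$) at the cost of an additive $C(n,\nu_1,\osc_{B_2}u)^2$. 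Collecting: all additive errors are of the form $C(n,\nu_1,\nu_2)(1+(\osc_{B_2}u)^2)$ after replacing the interior gradient bound $|Du(x_0)|\leq C(n)\osc_{B_2}(u)$ (valid since $u$ is convex on $B_2$, so its gradient on $B_1$ is controlled by the oscillation), which gives \eqref{J}. The main subtlety to get right is the sign bookkeeping in the $p$-$p$ Hessian term — ensuring that diagonalizing $D^2u$ does not destroy the $\Theta_{pp}\geq 0$ hypothesis — and making sure no uncontrolled $\lambda^2 h^2$ or $\lambda^3 h$ term survives; every such term must either cancel, sit inside the manifestly positive $Q$, or be killed by partial convexity.
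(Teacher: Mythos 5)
Your overall architecture matches the paper's (start from Proposition \ref{PropLaplGrad}, use convexity of $u$ to keep only $\tfrac1n|\nabla_g b|^2$ from the quadratic form $Q$, kill the $\Theta_{pp}$ contribution by partial convexity, absorb the remaining first-order terms by Young), but there is a genuine gap at the single most delicate point. You correctly flag $\sum_a g^{aa}\lambda_a\sum_k\Theta_{u_k}u_{kaa}$ as ``the only genuinely dangerous piece,'' but you never actually dispose of it: you attribute its resolution to partial convexity of $\Theta$ in $p$, which in fact only handles the \emph{different} term $\sum_a \tfrac{\lambda_a^3}{1+\lambda_a^2}\Theta_{u_au_a}$ (note also that this term is cubic, not quadratic, in $\lambda_a$ --- your coefficient $\tfrac{\lambda_i^2}{1+\lambda_i^2}\Theta_{u_iu_i}$ miscounts a power, which is exactly why partial convexity is needed there rather than a crude bound). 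The third-derivative term cannot be absorbed into the leftover positive $h^2$ slack of $Q$ either: writing it in terms of $h_{kaa}$ produces the weight $\lambda_a\sqrt{1+\lambda_k^2}$, so Cauchy--Schwarz against $\sum h_{kaa}^2$ leaves an unbounded remainder $\sum_{a,k}\Theta_{u_k}^2\lambda_a^2(1+\lambda_k^2)$. The missing ingredient is the identity recorded in \eqref{dilogdetg@p}, $\pd_k\log\sqrt{\det g}\overset{x_0}{=}\sum_a g^{aa}\lambda_a u_{aak}$, which converts the dangerous term exactly into $\sum_k\Theta_{u_k}\pd_k b$.

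Relatedly, your treatment of the drift term is not correct as stated: from $\pd_i\Theta=\Theta_{x_i}+\Theta_uu_i+\Theta_{u_i}\lambda_i$ one gets
\begin{equation*}
\sum_i g^{ii}\lambda_i^2(\pd_i\Theta)^2\ \geq\ \tfrac12\sum_i\tfrac{\lambda_i^4}{1+\lambda_i^2}\Theta_{u_i}^2-\dots,
\end{equation*}
which is unbounded in $\lambda$, so the claimed bound $\bigl|\sum_i g^{ii}\lambda_i(\pd_i\Theta)\pd_i b\bigr|\leq C|\nabla_g b|_g$ with controlled $C$ fails for the drift term in isolation. The two problems cure each other: grouping $\sum_k\Theta_{u_k}\pd_k b$ (from the identity above) with $-\sum_k\tfrac{\lambda_k}{1+\lambda_k^2}(\Theta_{x_k}+\Theta_uu_k+\Theta_{u_k}\lambda_k)\pd_k b$, the $\Theta_{u_k}$ pieces combine to $\Theta_{u_k}\bigl(1-\tfrac{\lambda_k^2}{1+\lambda_k^2}\bigr)=\tfrac{\Theta_{u_k}}{1+\lambda_k^2}$, yielding the total coefficient $\tfrac{1}{1+\lambda_k^2}(\Theta_{u_k}-\Theta_{x_k}\lambda_k-\Theta_uu_k\lambda_k)$ of $\pd_k b$, as in \eqref{ldgyoungs}; only then does Young's inequality against $|\nabla_g b|_g^2$ leave the bounded remainder $\sum_k g^{kk}(\Theta_{u_k}^2+\Theta_{x_k}^2\lambda_k^2+\Theta_u^2u_k^2\lambda_k^2)\leq C(n,\nu_1)(1+\|Du\|^2_{L^\infty(B_1)})$. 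Without this identity-plus-cancellation step your argument does not close; everything else in your outline (dropping the $\Theta_{pp}$ term by partial convexity and $\lambda>0$, bounding $|Du|$ on $B_1$ by $\osc_{B_2}(u)$ via convexity, the choice $c(n)=\tfrac1{2n}$) is in line with the paper.
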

\begin{proof}
    \begin{itemize}
    \item [Step 1.] From Proposition \ref{PropLaplGrad}, we get, at $x_0\in B_1(0)$,
\begin{align}
    \Delta_g \log\sqrt{\det g} - \frac{1}{n}|\nabla_g \log\sqrt{\det g}|^2_g &\geq \sum_{i=1}^n(1 + \lambda_i^2)h_{iii}^2 + \sum_{j\neq i}(3 + \lambda_j^2 + 2\lambda_i\lambda_j)h_{jji}^2 \nonumber\\
    &\qquad + 2\sum_{i<j<k}(3 + \lambda_i\lambda_j + \lambda_j\lambda_k + \lambda_k\lambda_i)h_{ijk}^2\nonumber\\
    &\qquad - \sum_{i=1}^n\lambda_i^2h_{iii}^2 - \sum_{i\neq j}\lambda_j^2h_{jji}^2\nonumber\\
    &\qquad +\sum_{i=1}^n g^{ii}\lambda_i\pd_{ii}\Theta - \sum_{i=1}^n g^{ii}\lambda_i(\pd_i\Theta)\pd_i\log\sqrt{\det g} \nonumber\\
    &=\sum_{i=1}^nh_{iii}^2 + \sum_{j\neq i}(3 + 2\lambda_i\lambda_j)h_{jji}^2 \nonumber\\
    &\qquad + 2\sum_{i<j<k}(3 + \lambda_i\lambda_j + \lambda_j\lambda_k + \lambda_k\lambda_i)h_{ijk}^2\nonumber\\
    &\qquad +\sum_{i=1}^n g^{ii}\lambda_i\pd_{ii}\Theta - \sum_{i=1}^n g^{ii}\lambda_i(\pd_i\Theta)\pd_i\log\sqrt{\det g}\nonumber\\
    &\geq \sum_{i=1}^n g^{ii}\lambda_i\pd_{ii}\Theta - \sum_{i=1}^n g^{ii}\lambda_i(\pd_i\Theta)\pd_i\log\sqrt{\det g}\label{LapGradDiff}
\end{align}
where the last inequality follows from the convexity of $u$.

From here, we use \eqref{dijpsi@p} to get
\begin{align}
    \sum_{a=1}^n g^{aa}\lambda_a \pd_{aa}\Theta &\overset{x_0}{=}\sum_{a=1}^n \frac{\lambda_a}{1+\lambda_a^2}\bigg[\Theta_{x_ax_a} + \Theta_{x_a u}u_a +  \Theta_{x_au_a}\lambda_a\nonumber\\
    & \qquad +\left(\Theta_{ux_a} + \Theta_{uu}u_a +  \Theta_{u u_a}\lambda_a \right)u_a + \Theta_u \lambda_a\nonumber\\
    & \qquad +\left(\Theta_{u_ax_a} + \Theta_{u_au}u_a + \Theta_{u_au_a}\lambda_a\right)\lambda_a\nonumber\\
    & \qquad +\sum_{k=1}^n \Theta_{u_k}u_{kaa}\bigg]\nonumber \\
    &\overset{x_0}{=}\sum_{a=1}^n \frac{\lambda_a}{1+\lambda_a^2}\bigg[\Theta_{x_ax_a} + 2\Theta_{x_a u}u_a + 2\Theta_{x_au_a}\lambda_a + 2\Theta_{uu_a}u_a\lambda_a\\
    &\qquad + \Theta_{u}\lambda_a + \Theta_{uu}u_a^2 + \Theta_{u_au_a}\lambda_a^2 + \sum_{k=1}^n \Theta_{u_k}u_{kaa}\bigg] \nonumber\\
    &\overset{x_0}{=}\sum_{a=1}^n \frac{\lambda_a}{1+\lambda_a^2}\bigg[\Theta_{x_ax_a} + 2\Theta_{x_a u}u_a+ 2\Theta_{x_au_a}\lambda_a + 2\Theta_{uu_a}u_a\lambda_a \label{d2psi1}\\
    &\qquad + \Theta_{u}\lambda_a + \Theta_{uu}u_a^2 + \Theta_{u_au_a}\lambda_a^2\bigg] + \sum_{k=1}^n \Theta_{u_k}\pd_k\log\sqrt{\det g}\quad\text{using \eqref{dilogdetg@p}.} \nonumber
\end{align}

Similarly, using \eqref{dipsi@p}, we get
\begin{align}
    \sum_{i=1}^n g^{ii}\lambda_i(\pd_i\Theta)\pd_i\log\sqrt{\det g} &\overset{x_0}{=} \sum_{i=1}^n \frac{\lambda_i}{1+ \lambda_i^2}\left(\Theta_{x_i} + \Theta_u u_i + \Theta_{u_i}\lambda_i\right)\pd_i\log\sqrt{\det g}.
\end{align}

Hence, \eqref{LapGradDiff} becomes
\begin{align}
    \sum_{a=1}^n &g^{aa}\lambda_a\pd_{aa}\Theta - \sum_{i=1}^n g^{ii}\lambda_i(\pd_i\Theta)\pd_i\log\sqrt{\det g}\nonumber\\
    &\overset{x_0}{=}\sum_{a=1}^n\frac{\lambda_a}{1+\lambda_a^2}\bigg[\Theta_{x_ax_a} + 2\Theta_{x_a u}u_a+ 2\Theta_{x_au_a}\lambda_a + 2\Theta_{uu_a}u_a\lambda_a
    + \Theta_{u}\lambda_a + \Theta_{uu}u_a^2 + \Theta_{u_au_a}\lambda_a^2\bigg]\nonumber\\
    &\quad+ \sum_{k=1}^n\Theta_{u_k}\pd_k\log\sqrt{\det g}-\sum_{k=1}^n\frac{\lambda_k}{1+ \lambda_k^2}\left(\Theta_{x_k} + \Theta_u u_k + \Theta_{u_k}\lambda_k\right)\pd_k\log\sqrt{\det g} \nonumber\\
    &\overset{x_0}{=}\sum_{a=1}^n\frac{\lambda_a}{1+\lambda_a^2}\bigg[\Theta_{x_ax_a} + 2\Theta_{x_a u}u_a+ 2\Theta_{x_au_a}\lambda_a + 2\Theta_{uu_a}u_a\lambda_a + \Theta_{u}\lambda_a + \Theta_{uu}u_a^2 + \Theta_{u_au_a}\lambda_a^2\bigg]\label{psiulambdas1}\\
    &\quad+\sum_{k=1}^n\frac{1}{1+ \lambda_k^2}\left(\Theta_{u_k} - \Theta_{x_k}\lambda_k - \Theta_u u_k\lambda_k\right)\pd_k\log\sqrt{\det g}.\label{ldgyoungs}
\end{align}
 
    \item[Step 2.1.]Using Young's inequality, \eqref{ldgyoungs} can be bounded below by
\begin{align}
    \sum_{k=1}^n\frac{1}{1+ \lambda_k^2} &\left(\Theta_{u_k} - \Theta_{x_k}\lambda_k - \Theta_u u_k\lambda_k\right)\pd_k\log\sqrt{\det g} \nonumber\\
    &\geq -\sum_{k=1}^n\frac{1}{1+ \lambda_k^2}\left(|\Theta_{u_k}| + |\Theta_{x_k}|\lambda_k + |\Theta_u u_k|\lambda_k\right)|\pd_k\log\sqrt{\det g}|\nonumber\\
    &\geq -\frac{1}{2\epsilon}\sum_{k=1}^n\frac{1}{1+ \lambda_k^2}\left(\Theta_{u_k}^2 + \Theta_{x_k}^2\lambda_k^2 + \Theta_u^2 u_k^2\lambda_k^2\right) - \frac{\epsilon}{2}|\nabla_g\log\sqrt{\det g}|_g^2. \label{psiulambdas2}
\end{align}

Altogether, from \eqref{LapGradDiff}, \eqref{psiulambdas1}, and \eqref{psiulambdas2}, we have
\begin{align}
    \Delta_g &\log\sqrt{\det g} - \left(\frac{1}{n}-\frac{\epsilon}{2}\right)|\nabla_g \log\sqrt{\det g}|^2_g\nonumber\\
    &\geq \sum_{a=1}^n\frac{\lambda_a}{1+\lambda_a^2}\bigg[\Theta_{x_ax_a} + 2\Theta_{x_a u}u_a + 2\Theta_{x_au_a}\lambda_a + 2\Theta_{uu_a}u_a\lambda_a + \Theta_{u}\lambda_a + \Theta_{uu}u_a^2 + \Theta_{u_au_a}\lambda_a^2\bigg]\nonumber\\
    &\quad -\frac{1}{2\epsilon}\sum_{k=1}^n\frac{1}{1+ \lambda_k^2}\left(\Theta_{u_k}^2 + \Theta_{x_k}^2\lambda_k^2 + \Theta_u^2 u_k^2\lambda_k^2\right).\nonumber
\end{align}
Let $\epsilon = \frac{1}{n}$, so that we achieve
\begin{align}
    \Delta_g &\log\sqrt{\det g} - \frac{1}{2n}|\nabla_g \log\sqrt{\det g}|^2_g\nonumber\\
    &\geq \sum_{a=1}^n\frac{\lambda_a}{1+\lambda_a^2}\bigg[\Theta_{x_ax_a} + 2\Theta_{x_a u}u_a + 2\Theta_{x_au_a}\lambda_a + 2\Theta_{uu_a}u_a\lambda_a + \Theta_{u}\lambda_a + \Theta_{uu}u_a^2 + \Theta_{u_au_a}\lambda_a^2\bigg]\label{psiulambdas3}\\
    &\quad -\frac{n}{2}\sum_{k=1}^n\frac{1}{1+ \lambda_k^2}\left(\Theta_{u_k}^2 + \Theta_{x_k}^2\lambda_k^2 + \Theta_u^2 u_k^2\lambda_k^2\right).\label{psiulambdas4}
\end{align}

\item[Step 2.2] Here, we use the assumption that $\Theta(x,z,p)$ is partially convex in the $p$ variable. That is, $\Theta_{u_a u_a} \geq 0$. This comes from the fact that $D^2_{Du}\Theta$ is a symmetric positive definite matrix. Combined with the fact that $u$ is a convex function, we get
\[
\frac{\lambda_a^3}{1 + \lambda_a^2}\Theta_{u_a u_a} \geq 0.
\]
Thus, \eqref{psiulambdas3} becomes
\begin{align}
    &\sum_{a=1}^n\frac{\lambda_a}{1+\lambda_a^2}\bigg[\Theta_{x_ax_a} + 2\Theta_{x_a u}u_a+ 2\Theta_{x_au_a}\lambda_a + 2\Theta_{uu_a}u_a\lambda_a + \Theta_{u}\lambda_a + \Theta_{uu}u_a^2 + \Theta_{u_au_a}\lambda_a^2 \bigg]\nonumber\\
    &\quad \geq \sum_{a=1}^n\frac{\lambda_a}{1+\lambda_a^2}\bigg[\Theta_{x_ax_a} + 2\Theta_{x_a u}u_a+ 2\Theta_{x_au_a}\lambda_a + 2\Theta_{uu_a}u_a\lambda_a + \Theta_{u}\lambda_a + \Theta_{uu}u_a^2\bigg]\nonumber\\
    &\quad \geq -\sum_{a=1}^n\frac{\lambda_a}{1+\lambda_a^2}\bigg[|\Theta_{x_ax_a}| + 2|\Theta_{x_a u}u_a|+ 2|\Theta_{x_au_a}|\lambda_a + 2|\Theta_{uu_a}u_a|\lambda_a + |\Theta_{u}|\lambda_a + |\Theta_{uu}|u_a^2\bigg].\label{abspsiulambda}
\end{align}

Now, for all $\lambda_a \in [0,\infty]$, we have that
\[
0 \leq \frac{\lambda_a}{1 + \lambda_a^2}\leq 1 \quad \text{and} \quad 0 \leq \frac{\lambda_a^2}{1 + \lambda_a^2}\leq 1.
\]
Hence, \eqref{psiulambdas4} and \eqref{abspsiulambda} yield
\begin{align}
    \Delta_g &\log\sqrt{\det g} - \frac{1}{2n}|\nabla_g \log\sqrt{\det g}|^2_g\nonumber\\
    &\geq - \sum_{a=1}^n\bigg[|\Theta_{x_ax_a}| + 2|\Theta_{x_a u}u_a| + 2|\Theta_{x_au_a}| + 2|\Theta_{uu_a}u_a| + |\Theta_{u}| + |\Theta_{uu}|u_a^2\bigg]\label{abspsiulambda2}\\
    &\quad - \frac{n}{2}\sum_{a=1}^n\left(\Theta_{u_a}^2 + \Theta_{x_a}^2 + \Theta_u^2 u_a^2\right).\nonumber
\end{align}
We observe that \eqref{abspsiulambda2} is bounded by
\begin{align}
    \sum_{a=1}^n &\bigg[|\Theta_{x_ax_a}| + 2|\Theta_{x_a u}u_a|+ 2|\Theta_{x_au_a}| + 2|\Theta_{uu_a}u_a| + |\Theta_{u}| + |\Theta_{uu}|u_a^2\bigg]\nonumber\\
    &\quad + \frac{n}{2}\sum_{a=1}^n\left(\Theta_{u_a}^2 + \Theta_{x_a}^2 + \Theta_u^2 u_a^2\right)\nonumber\\
    &\leq C(n,\nu_1,\nu_2)\left(1 + \sum_{a=1}^n(|u_a| + u_a^2)\right)\nonumber\\
    &\leq C(n,\nu_1,\nu_2)(1 + |Du(x_0)| + |Du(x_0)|^2)\nonumber\\
    &\leq C(n,\nu_1,\nu_2)(1 + ||Du||_{L^\infty(B_1)} + ||Du||_{L^\infty(B_1)}^2)\nonumber\\
    &\leq C(n,\nu_1,\nu_2)(1 + (\osc_{B_2}(u))^2)\nonumber
\end{align}
where the last inequality comes from the convexity of $u$ and Young's inequality.

Therefore,
\begin{equation*}
    \Delta_g \log\sqrt{\det g} - \frac{1}{2n}|\nabla_g \log\sqrt{\det g}|^2_g \geq - C(n,\nu_1,\nu_2)(1 + (\osc_{B_2}(u))^2)
\end{equation*}
as desired.
\end{itemize}
\end{proof}

\begin{corollary}\label{scor}
    Let $u$ be a $C^4$ convex solution to \eqref{s} in $B_2(0)\subset\re^n$. Assuming the Hessian $D^2u$ is diagonalized at $x_0\in B_1(0)$, \eqref{J} holds with $C= C(n,s_2)(1+(\osc_{B_2}(u))^2)$.
\end{corollary}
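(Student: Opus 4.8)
The plan is to verify that equation \eqref{s} fits into the framework of Lemma \ref{Jpoint} by checking its structure conditions explicitly, so that the Jacobi inequality \eqref{J} follows with a constant depending only on $n$ and $s_2$. For equation \eqref{s}, the phase function is $\Theta(x,z,p) = s_1 + s_2(x\cdot p - 2z)$, so I would first compute its derivatives: $\Theta_{x_i} = s_2 p_i$, $\Theta_z = -2s_2$, $\Theta_{p_i} = s_2 x_i$, and among the second derivatives, $\Theta_{x_i p_i} = s_2$ while $\Theta_{xx} = \Theta_{xz} = \Theta_{zz} = \Theta_{zp} = 0$. Crucially, $\Theta_{p_i p_j} = 0$, so $\Theta$ is (weakly) partially convex in the $p$ variable, and $D^2_{Du}\Theta = 0$ is positive semidefinite, which is all that Step 2.2 of Lemma \ref{Jpoint} requires.

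Next I would record that on $\Gamma_2 = B_2 \times u(B_2) \times Du(B_2)$, the quantities $|x|$ and $|p| = |Du|$ are controlled: $|x|\leq 2$ trivially, and by convexity of $u$ together with the standard gradient bound, $\|Du\|_{L^\infty(B_1)} \leq C(n)\,\osc_{B_2}(u)$ (indeed on $B_{3/2}$ or $B_1$). Therefore the structure constants in \eqref{struct} can be taken as $\nu_1 = \nu_1(n, s_2, \osc_{B_2}(u))$ and $\nu_2 = \nu_2(s_2)$, or more precisely one can absorb the oscillation-dependence exactly as in the final chain of inequalities at the end of the proof of Lemma \ref{Jpoint}. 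The point is that every term appearing in \eqref{abspsiulambda2} — namely $|\Theta_{x_a x_a}|$, $|\Theta_{x_a z} u_a|$, $|\Theta_{x_a p_a}|$, $|\Theta_{z p_a} u_a|$, $|\Theta_z|$, $|\Theta_{zz}| u_a^2$, and the squares $\Theta_{p_a}^2$, $\Theta_{x_a}^2$, $\Theta_z^2 u_a^2$ — is, for this specific $\Theta$, either a constant multiple of $s_2$ or $s_2^2$, or a multiple of $|u_a|$, $u_a^2$, $x_a^2$, or $x_a^2 u_a^2$, each of which is bounded by $C(n,s_2)(1 + (\osc_{B_2}(u))^2)$ using $|x|\leq 2$, the gradient bound, and Young's inequality.

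Concretely, I would simply invoke Lemma \ref{Jpoint} applied to $\Theta(x,z,p) = s_1 + s_2(x\cdot p - 2z)$: convexity of $u$ is given by hypothesis (it also follows from hypercriticality), partial convexity in $p$ holds since $\Theta_{pp} = 0$, and the structure conditions \eqref{struct} hold on $\Gamma_2$ with $\nu_1, \nu_2$ depending only on $n$, $s_2$, and $\osc_{B_2}(u)$. Tracking the dependence, the constant $C$ in \eqref{J} produced by Lemma \ref{Jpoint} is $C(n,\nu_1,\nu_2)(1 + (\osc_{B_2}(u))^2)$, which upon substituting the explicit $\nu_i$ and re-collecting the oscillation factors becomes $C(n,s_2)(1 + (\osc_{B_2}(u))^2)$, as claimed. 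There is no real obstacle here; the only mild care needed is to make sure the $x$-dependence of $\Theta_{p_i} = s_2 x_i$ (which produces terms like $\Theta_{x_i p_i} = s_2$ and contributes $\Theta_{p_i}^2 = s_2^2 x_i^2 \leq 4 s_2^2$) is handled, but since $x$ ranges over $B_2$ this is a harmless bounded contribution and is exactly the kind of term already absorbed into $C(n,\nu_1,\nu_2)$ in the proof of Lemma \ref{Jpoint}.
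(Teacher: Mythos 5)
Your proposal is correct and follows essentially the same route as the paper: both specialize the computation of Lemma \ref{Jpoint} to $\Theta=s_1+s_2(x\cdot p-2z)$, using $\Theta_{pp}=0$ for the partial convexity and $|x_0|\leq 1$, $|Du(x_0)|\leq C(n)\osc_{B_2}(u)$ to collect the constant. The one point you rightly do not gloss over is that the lemma cannot be cited as a pure black box---taking $\nu_1\sim s_2(1+\osc_{B_2}(u))$ in the stated constant $C(n,\nu_1,\nu_2)(1+(\osc_{B_2}(u))^2)$ would degrade the bound to order $(\osc_{B_2}(u))^4$---so one must re-trace the final inequalities with the explicit derivatives, exactly as you do; the paper instead notes that \eqref{psiulambdas1} vanishes identically for this phase (the terms $2\Theta_{x_au_a}\lambda_a$ and $\Theta_u\lambda_a$ cancel) and applies Young's inequality only to \eqref{ldgyoungs}, arriving at the same constant.
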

\begin{proof}
    Let $x_0\in B_1$. As $\Theta(x,u(x),Du(x)) = s_1 + s_2(x\cdot Du(x) - 2u(x))$, we get that
    \[
    \begin{matrix}
        \Theta_{x_i}= s_2u_i & \Theta_{x_ix_j}= 0 & \Theta_{x_iu}= 0&\Theta_{x_iu_j}=s_2\delta_{ij}\\
        \Theta_u = -2s_2 & \Theta_{ux_j} = 0& \Theta_{uu} = 0 &\Theta_{uu_j}=0\\
        \Theta_{u_i} = s_2x_i & \Theta_{u_ix_j}= s_2\delta_{ij} & \Theta_{u_iu}= 0 & \Theta_{u_iu_j}= 0.
    \end{matrix}
    \]
    Hence \eqref{psiulambdas1} becomes zero and \eqref{ldgyoungs} becomes
    \begin{equation*}
        \sum_{k=1}^n\frac{s_2}{1+ \lambda_k^2}\left(x_k + u_k\lambda_k\right)\pd_k\log\sqrt{\det g}.
    \end{equation*}
    Applying Young's inequality and simplifying, we get
    \begin{equation*}
        \Delta_g \log\sqrt{\det g} - \frac{1}{2n}|\nabla_g \log\sqrt{\det g}|^2_g \geq -\frac{ns_2^2}{2}\left(|x_0|^2 + |Du(x_0)|^2\right) \geq -C.
    \end{equation*}
\end{proof}

\begin{corollary}\label{tcor}
    Let $u$ be a $C^4$ convex solution to \eqref{tran} in $B_2(0)\subset\re^n$. Assuming the Hessian $D^2u$ is diagonalized at $x_0\in B_1(0)$, \eqref{J} holds with $C= C(n,t_2,t_3)$.
\end{corollary}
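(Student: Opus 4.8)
The plan is to mimic the proof of Corollary \ref{scor}, feeding the translator phase into the general Jacobi inequality established in Lemma \ref{Jpoint}. Writing the right-hand side of \eqref{tran} as $\Theta(x,z,p) = t_1 + t_2\cdot x + t_3\cdot p$, the first step is to record all the relevant derivatives: $\Theta_{x_i} = (t_2)_i$, $\Theta_u = 0$, $\Theta_{u_i} = (t_3)_i$, while every second-order partial derivative of $\Theta$ --- namely $\Theta_{x_ix_j}$, $\Theta_{x_iu}$, $\Theta_{x_iu_j}$, $\Theta_{uu}$, $\Theta_{uu_j}$, $\Theta_{u_iu_j}$ --- vanishes identically. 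In particular $\Theta$ is trivially partially convex in $p$, so Lemma \ref{Jpoint} and all of its intermediate identities apply.

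Next I would substitute these values into the decomposition \eqref{psiulambdas1}--\eqref{ldgyoungs} obtained in Step 1 of that proof. Since all second derivatives of $\Theta$ are zero, the bracketed sum \eqref{psiulambdas1} vanishes term by term, and since additionally $\Theta_u = 0$, the error term \eqref{ldgyoungs} collapses to
\[
\sum_{k=1}^n \frac{1}{1+\lambda_k^2}\big((t_3)_k - (t_2)_k\lambda_k\big)\,\pd_k\log\sqrt{\det g}.
\]
Applying Young's inequality exactly as in Step 2.1 --- estimating $|(t_3)_k - (t_2)_k\lambda_k| \le |(t_3)_k| + |(t_2)_k|\lambda_k$, peeling off $\tfrac{\epsilon}{2}|\nabla_g\log\sqrt{\det g}|^2_g$, and using $0 \le \tfrac{1}{1+\lambda_k^2} \le 1$ together with $0 \le \tfrac{\lambda_k^2}{1+\lambda_k^2} \le 1$ (valid since convexity of $u$ forces $\lambda_k \ge 0$) --- bounds the remainder by a constant multiple of $|t_2|^2 + |t_3|^2$. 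Choosing $\epsilon = 1/n$ then yields
\[
\Delta_g\log\sqrt{\det g} - \frac{1}{2n}|\nabla_g\log\sqrt{\det g}|^2_g \ge -C(n,t_2,t_3),
\]
which is \eqref{J} with the asserted constant.

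There is no genuine obstacle here; the content is entirely bookkeeping. The one point worth emphasizing is that, unlike in Lemma \ref{Jpoint} and Corollary \ref{scor}, the constant in this case carries no dependence on $\osc_{B_2}(u)$: because $\Theta_u \equiv 0$ and the surviving first derivatives $\Theta_{x_i}, \Theta_{u_i}$ are constants, none of the $|u_a|$ or $u_a^2$ terms responsible for the oscillation dependence in the general estimate ever appear. This improvement is precisely what later produces the sharper exponent $3n-2$ recorded in the remark following Theorem \ref{main0}.
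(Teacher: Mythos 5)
Your proposal is correct and follows essentially the same route as the paper: substitute the translator phase into the decomposition \eqref{psiulambdas1}--\eqref{ldgyoungs}, observe that all second derivatives of $\Theta$ vanish, and apply Young's inequality to the remaining first-order term to get a constant depending only on $n,t_2,t_3$. (Your sign $(t_3)_k-(t_2)_k\lambda_k$ is in fact the faithful substitution into \eqref{ldgyoungs}; the discrepancy with the paper's displayed sign is immaterial since the term is estimated in absolute value, and your closing observation about the absence of $\osc$-dependence is exactly what yields the exponent $3n-2$.)
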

\begin{proof}
    As $\Theta(x,u(x),Du(x)) = t_1 + t_2\cdot x + t_3\cdot Du(x)$, we get 
    \[
    \Theta_{x_i} = t_{2,i} \quad\text{ and }\quad \Theta_{u_i} = t_{3,i}
    \]
    where all the remaining derivatives are zero.
    Hence \eqref{psiulambdas1} is zero and \eqref{ldgyoungs} becomes
    \begin{equation*}
        \sum_{k=1}^n\frac{1}{1+ \lambda_k^2}\left(t_{3,k} + t_{2,k}\lambda_k\right)\pd_k\log\sqrt{\det g}.
    \end{equation*}
    Applying Young's inequality and simplifying, we get
    \begin{equation*}
        \Delta_g \log\sqrt{\det g} - \frac{1}{2n}|\nabla_g \log\sqrt{\det g}|^2_g \geq -\frac{n}{2}\left(|t_2|^2 + |t_3|^2\right)= -C.
    \end{equation*}
\end{proof}

\begin{corollary}\label{rcor}
    Let $u$ be a $C^4$ convex solution to \eqref{rotator} in $B_2(0)\subset\re^n$. Assuming the Hessian $D^2u$ is diagonalized at $x_0\in B_1(0)$, \eqref{J} holds with $C= C(n,r_2)(1 + (\osc_{B_2}(u))^2)$.
\end{corollary}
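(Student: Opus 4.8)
The plan is to follow the template of Corollaries \ref{scor} and \ref{tcor}: specialize the general estimate of Lemma \ref{Jpoint} to the phase $\Theta(x,z,p) = r_1 + \frac{r_2}{2}(|x|^2 + |p|^2)$ coming from \eqref{rotator}, by computing its partial derivatives (with $x,z,p$ treated as independent) and substituting them into the two intermediate quantities produced in Step~1 of that proof, namely the ``bracket'' term \eqref{psiulambdas1} and the gradient-coupling term \eqref{ldgyoungs}.

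First I would record the derivatives: $\Theta_{x_i} = r_2 x_i$, $\Theta_{u_i} = r_2 u_i$ (after evaluating $p = Du(x)$), $\Theta_u = 0$, and among the second derivatives $\Theta_{x_ix_j} = \Theta_{u_iu_j} = r_2\delta_{ij}$, while $\Theta_{uu}$, $\Theta_{x_iu}$, $\Theta_{x_iu_j}$, and $\Theta_{uu_j}$ all vanish. Substituting into \eqref{psiulambdas1}, the bracket for index $a$ collapses to $\Theta_{x_ax_a} + \Theta_{u_au_a}\lambda_a^2 = r_2(1 + \lambda_a^2)$, so the entire term equals $r_2\sum_{a=1}^n\lambda_a$, which is $\ge 0$ by the convexity of $u$ (here $r_2 > 0$, equivalently partial convexity of $\Theta$ in $Du$, is used, consistent with Remark \ref{sing}). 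Thus the contribution of \eqref{psiulambdas1} drops out entirely.

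It then remains to bound \eqref{ldgyoungs}, which for this $\Theta$ reads $\sum_{k=1}^n \frac{r_2}{1+\lambda_k^2}\bigl(u_k - x_k\lambda_k\bigr)\pd_k\log\sqrt{\det g}$. Applying Young's inequality exactly as in Step~2.1 of Lemma \ref{Jpoint} with $\epsilon = \tfrac1n$ splits off a $\tfrac1{2n}|\nabla_g\log\sqrt{\det g}|_g^2$ term and leaves $-\tfrac{nr_2^2}{2}\sum_k \frac{(u_k - x_k\lambda_k)^2}{1+\lambda_k^2}$; by Cauchy--Schwarz $(u_k - x_k\lambda_k)^2 \le (u_k^2 + x_k^2)(1+\lambda_k^2)$, so this is $\ge -\tfrac{nr_2^2}{2}\bigl(|x_0|^2 + |Du(x_0)|^2\bigr)$. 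Since $x_0\in B_1(0)$ gives $|x_0|\le 1$, and convexity of $u$ gives $|Du(x_0)| \le \|Du\|_{L^\infty(B_1)} \le C(n)\,\osc_{B_2}(u)$, we obtain \eqref{J} with $C = C(n,r_2)\bigl(1 + (\osc_{B_2}(u))^2\bigr)$, as claimed.

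There is no genuine obstacle here beyond bookkeeping; the one point worth highlighting is that the quadratic form $|x|^2 + |Du|^2$ is exactly what makes the second-derivative terms of $\Theta$ recombine into the factor $r_2(1+\lambda_a^2)$ that cancels the elliptic weight $g^{aa} = (1+\lambda_a^2)^{-1}$, and that the sign $r_2 > 0$ is essential at that step --- for $r_2 < 0$ the term $r_2\sum_a\lambda_a = r_2\Delta u$ has no lower bound, in line with the failure of Hessian estimates without partial convexity of the phase in the gradient.
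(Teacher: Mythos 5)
Your proposal is correct and follows essentially the same route as the paper: compute the derivatives of $\Theta = r_1 + \tfrac{r_2}{2}(|x|^2+|p|^2)$, observe that the second-derivative bracket in \eqref{psiulambdas1} recombines into $r_2\sum_a\lambda_a \ge 0$ (using $r_2\ge 0$ and convexity of $u$), and control \eqref{ldgyoungs} by Young's inequality to arrive at $-\tfrac{nr_2^2}{2}(|x_0|^2+|Du(x_0)|^2)\ge -C(n,r_2)(1+(\osc_{B_2}(u))^2)$. You merely make explicit two steps the paper leaves as ``simplifying'' (the Cauchy--Schwarz bound $(u_k-x_k\lambda_k)^2\le (u_k^2+x_k^2)(1+\lambda_k^2)$ and the bound of $|Du(x_0)|$ by the oscillation), so there is no substantive difference.
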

\begin{proof}
    Let $x_0\in B_1$. As $\Theta(x,u(x),Du(x)) = r_1 + \frac{r_2}{2}(|x|^2 + |Du(x)|^2)$, we get 
    \[
    \begin{matrix}
        \Theta_{x_i}= r_2x_i & \Theta_{x_ix_j}= r_2\delta_{ij} &\Theta_{x_iu_j}=0\\
        \Theta_{u_i} = r_2u_i & \Theta_{u_ix_j}= 0  & \Theta_{u_iu_j}= r_2\delta_{ij}.
    \end{matrix}
    \]
    Then \eqref{psiulambdas1} and \eqref{ldgyoungs} are bounded below by
    \begin{align*}
        \sum_{a=1}^n\frac{\lambda_a}{1+\lambda_a^2}\bigg[r_2 +&r_2\lambda_a^2\bigg]+\sum_{k=1}^n\frac{r_2}{1+ \lambda_k^2}\left(u_k - x_k\lambda_k\right)\pd_k\log\sqrt{\det g}\\
        &\geq \sum_{k=1}^n\frac{r_2}{1+ \lambda_k^2}\left(u_k - x_k\lambda_k\right)\pd_k\log\sqrt{\det g}
    \end{align*}
    since $r_2\geq 0$ and $\lambda_a\geq0$ for all $1\leq a\leq n$. Thus, using Young's inequality and simplifying, we get
    \begin{equation*}
        \Delta_g \log\sqrt{\det g} - \frac{1}{2n}|\nabla_g \log\sqrt{\det g}|^2_g \geq -\frac{nr_2^2}{2}\left(|x_0|^2 + |Du(x_0)|^2\right) \geq -C.
    \end{equation*}
\end{proof}

\begin{lemma}\label{IJlem}
    Let $u$ be a $C^4$ convex solution of \eqref{s},\eqref{tran},\eqref{rotator},\eqref{slag} on $B_{2}(0)\subset\mathbb{R}^n$. Let 
    \begin{equation*}
        b= \log V = \log \sqrt{\det g}.
    \end{equation*}
    Then $b$ is $C^2$, and hence, for all nonnegative $\phi \in C_0^\infty(B_1)$, $b$ satisfies the integral Jacobi inequality, each with their respective constant $C$:
    \begin{equation*}
        \int_{B_1}-\langle \nabla_g \phi,\nabla_g b\rangle_g dv_g\geq c(n)\int_{B_1}\phi|\nabla_gb|^2dv_g-\int_{B_1}C\phi\; dv_g. 
    \end{equation*}
    Consequently, we have
    \begin{equation*}
        \int_{B_r}|\nabla_g b|^2 dv_g \leq C(n)\left(\frac{1}{1-r} + C\right)\int_{B_1}d v_g
    \end{equation*}
    for $0< r < 1$.
\end{lemma}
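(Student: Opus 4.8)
The plan is to obtain the integral Jacobi inequality by testing the \emph{pointwise} inequality \eqref{J} against a nonnegative $\phi\in C_0^\infty(B_1)$ and integrating by parts against $dv_g$, and then to read off the $L^2$ bound for $\nabla_g b$ by a Caccioppoli-type computation. The genuinely hard analytic input — the pointwise Jacobi inequality, with its delicate bookkeeping of the $\Theta$-derivatives — is already in hand (Lemma~\ref{Jpoint} for \eqref{slag}, and Corollaries~\ref{scor}, \ref{tcor}, \ref{rcor} for \eqref{s}, \eqref{tran}, \eqref{rotator}), so everything that remains is soft. The two points that need a word of justification are: (a) that \eqref{J}, which was derived in coordinates diagonalizing $D^2u$ at a point, is in fact valid at \emph{every} point of $B_1$; and (b) that the integration by parts is legitimate, which is exactly why one asserts the $C^2$ regularity of $b$. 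I do not expect a real obstacle beyond (a).

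First I would record the regularity of $b$. Since $u\in C^4$, the entries $g_{ab}=\delta_{ab}+\sum_k u_{ak}u_{kb}$ of the induced metric are $C^2$, hence so is $\det g$; as $\det g\ge 1$ and $t\mapsto\tfrac12\log t$ is smooth on $[1,\infty)$, we get $b=\tfrac12\log\det g\in C^2(B_2)$. In particular $\nabla_g b$ is a $C^1$ vector field and $\Delta_g b$ is continuous on $B_1$, so every integral below is classical.

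Next, fix $x_0\in B_1$ and apply an orthogonal change of the $x$-variables that diagonalizes $D^2u(x_0)$. Each of \eqref{s}, \eqref{tran}, \eqref{rotator}, \eqref{slag} transforms into an equation of the same type — with the hypercritical phase, the partial convexity of $\Theta$ in the gradient slot, the structure constants $\nu_1,\nu_2$, $\osc_{B_2}(u)$, and $|s_2|$ (resp.\ $|t_2|,|t_3|$, resp.\ $|r_2|$) all preserved — so Proposition~\ref{PropLaplGrad} and Lemma~\ref{Jpoint} (or the relevant corollary) apply and give, at $x_0$ and in those coordinates, $\Delta_g b\ge c(n)|\nabla_g b|_g^2-C$. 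But $\Delta_g b$ and $|\nabla_g b|_g^2=g^{ij}b_ib_j$ are scalars, invariant under orthogonal changes of coordinates, so the inequality holds at $x_0$ in the original frame as well; since $x_0\in B_1$ was arbitrary, \eqref{J} holds throughout $B_1$, with $C$ the constant named in the corresponding statement. Multiplying by $\phi\ge0$, $\phi\in C_0^\infty(B_1)$, and integrating against $dv_g$, the divergence theorem for the Laplace--Beltrami operator (no boundary term, as $\phi$ has compact support and $b\in C^2$) gives $\int_{B_1}\phi\,\Delta_g b\,dv_g=-\int_{B_1}\langle\nabla_g\phi,\nabla_g b\rangle_g\,dv_g$, hence
\[
-\int_{B_1}\langle\nabla_g\phi,\nabla_g b\rangle_g\,dv_g\;\ge\;c(n)\int_{B_1}\phi\,|\nabla_g b|_g^2\,dv_g-\int_{B_1}C\phi\,dv_g,
\]
which is the asserted integral Jacobi inequality.

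For the consequence, I would insert $\phi=\eta^2$, where $\eta\in C_0^\infty(B_1)$, $0\le\eta\le1$, $\eta\equiv1$ on $B_r$, and $|D\eta|\le C(n)/(1-r)$. Using $|\langle\nabla_g\eta,\nabla_g b\rangle_g|\le|\nabla_g\eta|_g|\nabla_g b|_g$ followed by Young's inequality to absorb $\tfrac{c(n)}{2}\int\eta^2|\nabla_g b|_g^2\,dv_g$ into the left side leaves $\int_{B_1}\eta^2|\nabla_g b|_g^2\,dv_g\le C(n)\int_{B_1}|\nabla_g\eta|_g^2\,dv_g+C(n)\,C\int_{B_1}\eta^2\,dv_g$. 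Since $g=I_n+(D^2u)^2\ge I_n$ we have $g^{-1}\le I_n$, so $|\nabla_g\eta|_g^2=g^{ij}\eta_i\eta_j\le|D\eta|^2$ and $\eta^2\le1$; together with $\int_{B_r}|\nabla_g b|_g^2\,dv_g\le\int_{B_1}\eta^2|\nabla_g b|_g^2\,dv_g$ and the bound on $|D\eta|$, this yields
\[
\int_{B_r}|\nabla_g b|_g^2\,dv_g\;\le\;C(n)\Big(\tfrac{1}{1-r}+C\Big)\int_{B_1}dv_g
\]
after relabeling constants. As noted, the only place where the geometry and the PDE enter is the pointwise inequality \eqref{J} borrowed from Lemma~\ref{Jpoint}; everything downstream is routine, the one item most worth double-checking being the invariance argument that promotes \eqref{J} from a single point to all of $B_1$.
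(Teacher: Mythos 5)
Your proof is correct and follows essentially the same route as the paper: establish $b\in C^2$ from $u\in C^4$ and $\det g\ge 1$, integrate the pointwise inequality \eqref{J} against $\phi\,dv_g$, and then run the standard Caccioppoli argument with $\phi=\eta^2$ and Young's inequality. Your explicit justification that \eqref{J} holds at every point (via invariance of $\Delta_g b$ and $|\nabla_g b|_g^2$ under the orthogonal change of variables diagonalizing $D^2u(x_0)$) is a point the paper leaves implicit, and is a worthwhile addition.
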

\begin{proof}
    Since $u$ is $C^4$, it follows that $g = I + (D^2u)^2$ is $C^2$. Note that $\det g$ is $C^2$ since the determinant is a smooth function, and furthermore, at each point, we have $\det g(x) =\prod_i^n(1 + \lambda_i^2(x))  \geq 1$. From this, it follows that $\log\sqrt{\det g}$ is well defined and $C^2$ as a composition of smooth and $C^2$ functions. It immediately follows, using \eqref{J} and integration by parts,
    \begin{align*}
        \int_{B_1}-\langle \nabla_g \phi,\nabla_g b\rangle_g dv_g &= \int_{B_1}\phi \Delta_g b\; dv_g\\
        &\geq c(n)\int_{B_1}\phi|\nabla_gb|^2dv_g-\int_{B_1}C\phi\; dv_g.
    \end{align*}

    Rearranging, we see that for any cutoff $\phi \in C_0^\infty(B_1)$,
    \begin{align*}
        \int_{B_1} \phi^2|\nabla_g b|^2\;dv_g &\leq \frac{1}{c(n)}\int_{B_1}\phi^2 \Delta_g b\; dv_g +\frac{1}{c(n)}\int_{B_1}\phi^2C\;dv_g \\
        &= -\frac{1}{c(n)}\int_{B_1}\langle 2\phi\nabla_g \phi,\nabla_g b\rangle_g dv_g + \frac{1}{c(n)}\int_{B_1}\phi^2C\;dv_g\\
        &\leq \frac{1}{2}\int_{B_1}\phi^2|\nabla_g b|^2dv_g + \frac{2}{c(n)^2}\int_{B_1}|\nabla_g \phi|^2 dv_g + \frac{1}{c(n)}\int_{B_1}\phi^2C\;dv_g. 
    \end{align*}
    Let $0< r< 1$. Choose $0\leq \phi \leq 1$ with $\phi=1$ on $B_r$ and $|D\phi|\leq \frac{2}{1-r}$ in $B_1$ to get
    \begin{align*}
        \int_{B_r}|\nabla_g b|^2dv_g&\leq \int_{B_1}\phi^2|\nabla_g b|^2dv_g\\
        &\leq \frac{4}{c(n)^2}\int_{B_1}|\nabla_g\phi|^2dv_g + \frac{2}{c(n)}\int_{B_1} \phi^2 C\; dv_g\\
        &\leq C(n)\left(\frac{1}{1-r} + C\right)\int_{B_1}dv_g.
    \end{align*}
\end{proof}

\section{Sobolev Inequalities and the Lewy-Yuan rotation}\label{LY}

We first recall the Lewy-Yuan rotation developed in \cite[p.122]{YY} for the convex potential $u$ of the Lagrangian graph $X = (x,Du(x))$: We
rotate it to $X = (\bar{x},D\bar{u}(\bar{x}))$ in a new co-ordinate system of $\re^n\times \re^n\cong \com^n$ via $\bar{z} = e^{-i\frac{\pi}{4}}z$, where $z = x + iy$ and $\bar{z} = \bar{x} + i\bar{y}$. That is,
\begin{equation}\label{lyrot}
\begin{cases}
     \bar{x} = \frac{\sqrt{2}}{2}x + \frac{\sqrt{2}}{2}Du(x)\\
    \bar{y} = D\bar{u} = -\frac{\sqrt{2}}{2} x + \frac{\sqrt{2}}{2}Du(x) .
\end{cases}
\end{equation} 
   
We state the following proposition from \cite[Prop 3.1]{CWY} and \cite[p.122]{YY}.
\begin{proposition}
    Let $u$ be a $C^4$ convex function on $B_R(0)\subset \re^n$. Then the Lagrangian submanifold $X = (x,Du(x))\subset \re^n\times \re^n$ can be represented as a gradient graph $X = (\bar{x},D\bar{u}(\bar{x}))$ of the new potential $\bar{u}$ in a domain containing a ball of radius
    \begin{equation}\label{rbar}
        \bar{R}\geq \frac{\sqrt{2}}{2}R
    \end{equation}
    such that in these coordinates the new Hessian satisfies
    \begin{equation}\label{dbaru}
        -I \leq D^2\bar{u} \leq I.
    \end{equation}
\end{proposition}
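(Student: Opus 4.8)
The plan is to realize the rotation \eqref{lyrot} as the gradient map of an auxiliary \emph{convex} function, and then to read off both the size of the new domain and the Hessian bound from elementary convex analysis and linear algebra. Set $w(x):=\tfrac{\sqrt2}{2}\big(\tfrac12|x|^2+u(x)\big)$ on $B_R(0)$, so that $\bar x=Dw(x)$ and $D^2w=\tfrac{\sqrt2}{2}(I+D^2u)>0$; thus $w$ is strictly convex and $\Phi:=Dw$ is a $C^3$ local diffeomorphism. Strict monotonicity, $\langle\Phi(x)-\Phi(x'),x-x'\rangle>0$ for $x\neq x'$ (a consequence of the strict convexity of $w$), gives injectivity, so $\Phi$ maps $B_R(0)$ diffeomorphically onto the open, simply connected set $\Omega:=\Phi(B_R(0))$. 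Since $\bar z=e^{-i\pi/4}z$ is unitary, it preserves the symplectic form, $\sum d\bar x_i\wedge d\bar y_i=\sum dx_i\wedge dy_i$, so the rotated graph $X=(\bar x,\bar y(\bar x))$ over $\Omega$ is again Lagrangian; equivalently, the matrix $D_{\bar x}\bar y$ computed below is symmetric, so the $1$-form $\sum\bar y_i\,d\bar x_i$ is closed, hence exact on $\Omega$. Either way there is a $C^4$ function $\bar u$ on $\Omega$ with $\bar y=D\bar u$.

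Next I would show that $\Omega$ contains the ball of radius $\tfrac{\sqrt2}{2}R$ about $\Phi(0)$. Fix $R'<R$ and a point $p$ with $|p-\Phi(0)|<\tfrac{\sqrt2}{2}R'$; I claim $\Phi(x)=p$ has a solution in $B_{R'}$. The convex function $g(x):=w(x)-p\cdot x$ attains its minimum over $\overline{B_{R'}}$ at some $x^\ast$, and if $x^\ast$ is interior then $\nabla g(x^\ast)=\Phi(x^\ast)-p=0$ and we are done. If $x^\ast\in\partial B_{R'}$, the variational condition forces $\Phi(x^\ast)-p=-\mu x^\ast$ for some $\mu\geq 0$, so $p-\Phi(0)=(\Phi(x^\ast)-\Phi(0))+\mu x^\ast$. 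Because $\langle\Phi(x^\ast)-\Phi(0),x^\ast\rangle=\tfrac{\sqrt2}{2}\big(|x^\ast|^2+\langle Du(x^\ast)-Du(0),x^\ast\rangle\big)\geq 0$ and $|\Phi(x^\ast)-\Phi(0)|^2\geq\tfrac12|x^\ast|^2=\tfrac12(R')^2$, both by convexity of $u$, expanding the square gives $|p-\Phi(0)|^2\geq|\Phi(x^\ast)-\Phi(0)|^2\geq\tfrac12(R')^2$, contradicting the choice of $p$. Hence the ball of radius $\tfrac{\sqrt2}{2}R'$ about $\Phi(0)$ lies in $\Omega$ for every $R'<R$; letting $R'\uparrow R$ yields $\bar R\geq\tfrac{\sqrt2}{2}R$. (Working with $R'<R$ also sidesteps the fact that $u$ is only assumed $C^4$ on the open ball.)

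Finally, differentiating $\bar x=\tfrac{\sqrt2}{2}(x+Du)$ and $\bar y=\tfrac{\sqrt2}{2}(-x+Du)$ in $x$ and applying the chain rule,
\[
D^2\bar u \;=\; D_{\bar x}\bar y \;=\; \frac{\partial\bar y}{\partial x}\Big(\frac{\partial\bar x}{\partial x}\Big)^{-1} \;=\; (D^2u-I)(D^2u+I)^{-1}.
\]
Since $D^2u$ is symmetric and positive semidefinite, it and $(D^2u+I)^{-1}$ are simultaneously diagonalizable, so $D^2\bar u$ is symmetric (reconfirming the exactness used above) with eigenvalues $\tfrac{\lambda_i-1}{\lambda_i+1}\in[-1,1)$; in particular $-I\leq D^2\bar u\leq I$.

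The step I expect to be the real obstacle is the ball-containment claim: passing from ``$\Omega$ is the diffeomorphic image of $B_R$'' to ``$\Omega$ contains a concrete ball of radius $\tfrac{\sqrt2}{2}R$ about $\Phi(0)$'' is precisely where the convexity of $u$ must be used, through the constrained-minimization (or, alternatively, a degree-theoretic) argument above, together with the minor care about interior versus boundary regularity; the existence of $\bar u$ and the Hessian bound are then essentially formal.
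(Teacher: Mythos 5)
Your proof is correct. Note that the paper itself does not prove this proposition — it is quoted from \cite[Prop.\ 3.1]{CWY} and \cite[p.\ 122]{YY} — and your argument is essentially the standard one from those references: the unitary rotation preserves the Lagrangian condition so $\bar y$ is exact on the simply connected image, and the Hessian transforms as $(D^2u-I)(D^2u+I)^{-1}$ with eigenvalues $\frac{\lambda_i-1}{\lambda_i+1}\in[-1,1)$. The one place you diverge is the ball-containment step: the cited proofs get it from the distance-expanding property $|\Phi(x)-\Phi(x')|\geq\frac{\sqrt2}{2}|x-x'|$ (itself a consequence of $D\Phi\geq\frac{\sqrt2}{2}I$) plus a topological/degree argument, whereas your constrained-minimization (KKT) argument reaches the same conclusion by elementary convexity; both are sound, and yours has the small advantage of avoiding invariance of domain.
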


We define
\begin{equation*}
    \bar{\Omega}_r = \bar{x}(B_r(0)).
\end{equation*}
From \eqref{lyrot}, for $\bar{x}\in \bar{\Omega}_r$, we have that
\begin{equation}\label{rho}
    |\bar{x}|\leq r\frac{\sqrt{2}}{2} + ||Du||_{L^\infty(B_r)}\frac{\sqrt{2}}{2} := \rho(r),
\end{equation}
and from \eqref{rbar}, we have
\begin{equation*}
    \text{dist}(\bar{\Omega}_1,\pd\bar{\Omega}_{2n})\geq \frac{2n-1}{\sqrt{2}}\geq \frac{3}{\sqrt{2}}>2.
\end{equation*}

From \eqref{dbaru}, it follows that the induced metric on $X= (\bar{x},D\bar{u}(\bar{x}))$ in $\bar{x}-$coordinates is bounded by
\begin{equation}\label{barmetric}
    d\bar{x}^2 \leq g(\bar{x})\leq 2 d\bar{x}^2.
\end{equation}
Next, we state the following Sobolev inequality, which is a generalization of  Proposition 3.2 from \cite{CWY}. For the sake of completeness, we add a proof below. 
\begin{proposition}\label{sobo}
    Let $u$ be a $C^4$ convex function on $B_{R'}(0)\subset\re^n$. Let $f$ be a $C^2$ positive function on the Lagrangian surface $X=(x,Du(x))$. Let $0 < r < R < R'$ be such that $R-r> 2\sqrt{2}\epsilon$. Then
    \begin{equation*}
        \left[\int_{B_r}|(f-\tilde{f})^+|^\frac{n}{n-1}dv_g \right]^\frac{n-1}{n} \leq C(n)\left(\frac{\rho^2}{r \epsilon}\right)^{(n-1)}\int_{B_R}|\nabla_g(f - \tilde{f})^+|dv_g
    \end{equation*}
    where $\rho = \rho(R')$ is as defined in \eqref{rho}, and
    \[
    \tilde{f} = \frac{2}{|B_r|}\int_{B_R(0)} fdx.
    \]
\end{proposition}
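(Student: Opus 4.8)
The plan is to transport the classical isoperimetric/Sobolev inequality on $\re^n$ to the Lagrangian surface $X$ via the Lewy-Yuan rotation, exploiting that in the rotated $\bar x$-coordinates the induced metric is uniformly equivalent to the Euclidean metric (this is \eqref{barmetric}: $d\bar x^2 \le g(\bar x) \le 2\,d\bar x^2$). Write $w = (f-\tilde f)^+$, which is a nonnegative $C^{0,1}$ function that vanishes on the set $\{f \le \tilde f\}$; by the choice $\tilde f = \frac{2}{|B_r|}\int_{B_R(0)} f\,dx$, the average of $f$ over $B_R$ forces $w$ to vanish on a subset of $B_r$ of definite measure (at least half of $|B_r|$, using $\det g \ge 1$ so that $dv_g \ge dx$). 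This is exactly the hypothesis needed to run a Sobolev inequality with a \emph{zero set of positive measure} rather than compact support: on such functions one has $\|w\|_{L^{n/(n-1)}(B_r)} \le C(n)\,\frac{|B_r|^{?}}{(\text{measure of zero set})^{?}}\,\|Dw\|_{L^1(B_r)}$, i.e. a Poincaré-Sobolev inequality whose constant is controlled by the ratio of the ambient ball to the zero set.

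First I would pass everything to $\bar x$-coordinates. Under $\bar x = \frac{\sqrt2}{2}(x + Du(x))$ the ball $B_r(0)$ maps to $\bar\Omega_r \subset B_{\rho(r)}$ with $\rho = \rho(R')$ as in \eqref{rho}, and $B_R(0)$ maps to $\bar\Omega_R$; the map is a diffeomorphism because $u$ is convex. By \eqref{barmetric}, $dv_g$ and $d\bar x$ are comparable up to dimensional constants, $|\nabla_g w|$ and $|D_{\bar x} w|$ are comparable, and the $L^{n/(n-1)}$ norms match up to constants. So it suffices to prove the Euclidean statement
\[
\Big[\int_{\bar\Omega_r} w^{\frac{n}{n-1}}\,d\bar x\Big]^{\frac{n-1}{n}} \le C(n)\Big(\frac{\rho^2}{r\epsilon}\Big)^{n-1}\int_{\bar\Omega_R} |D_{\bar x} w|\,d\bar x,
\]
where $w$ vanishes on a subset of $\bar\Omega_r$ of measure $\ge \frac12|\bar\Omega_r|$ and the condition $R - r > 2\sqrt2\,\epsilon$ guarantees (via $\bar R \ge \frac{\sqrt2}{2}R$ and the Lipschitz bound on $\bar x$) a collar of width $\ge 2\epsilon$ inside $\bar\Omega_R$ around $\bar\Omega_r$ in which to extend/cutoff. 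I would then invoke the Sobolev-Poincaré inequality for $BV$ functions vanishing on a set of positive measure (e.g. the relative isoperimetric inequality applied to superlevel sets, or \cite[Prop 3.2]{CWY} whose proof this generalizes): $\|w\|_{L^{n/(n-1)}(\bar\Omega_r)} \le C(n)\,\frac{|\bar\Omega_r|}{|\{w=0\}\cap\bar\Omega_r|^{1/n}\cdot(\text{something})}\,\|Dw\|_{L^1}$, and track constants. Since $\bar\Omega_r$ sits inside a ball of radius $\rho$ but contains the image of $B_r$, which has measure comparable to $r^n$ (convexity of $u$ does \emph{not} give a lower volume bound directly, so here one uses instead the gradient-graph structure: $\bar x(B_r)$ contains a ball of radius $\gtrsim r$ by the $\bar R \ge \frac{\sqrt2}{2}R$ estimate applied at smaller scales), one gets the geometric factor $(\rho^2/(r\epsilon))^{n-1}$ after combining the diameter $\rho$, the inner radius $r$, and the collar width $\epsilon$.

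The main obstacle I expect is bookkeeping the geometric constant $(\rho^2/(r\epsilon))^{n-1}$ precisely: one must relate three scales — the Euclidean diameter $\rho$ of $\bar\Omega_r$, a lower bound of order $r$ on the size of the zero set (or inner radius) of $\bar\Omega_r$, and the collar width $\epsilon$ available inside $\bar\Omega_R$ — and show that the CWY-type Sobolev constant depends on them exactly through this combination. A secondary subtlety is that $\bar\Omega_r$ need not be convex or even nicely shaped, so the relative isoperimetric inequality must be applied on a fixed larger ball $B_\rho$ (into which $\bar\Omega_R$ embeds) with $w$ extended by the cutoff argument; the $\epsilon$-collar is what makes this extension lossless up to the stated constant. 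Everything else — the change of variables, the metric comparisons, the reduction to $BV$ functions — is routine once the zero-set structure of $(f-\tilde f)^+$ is observed.
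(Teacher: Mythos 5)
Your overall strategy matches the paper's: reduce to the rotated coordinates $\bar x=\tfrac{\sqrt2}{2}(x+Du(x))$ where the metric is uniformly Euclidean, exploit that $(f-\tilde f)^+$ vanishes on at least half of $B_r$ (which is exactly why $\tilde f$ is defined as \emph{twice} the average), and run a relative isoperimetric inequality on superlevel sets followed by layer cake and the co-area formula. Two remarks on details. First, you worry that convexity does not give a lower volume bound for $\bar x(B_r)$ and propose an inner-ball containment; this is unnecessary — convexity gives $D\bar x/Dx=\tfrac{\sqrt2}{2}(I+D^2u)\ge\tfrac{\sqrt2}{2}I$, hence a Jacobian lower bound $2^{-n/2}$, and the measure bound $|\{f\le t\}\cap B_r|>|B_r|/2$ transports directly to $|A_t^c\cap\bar\Omega_r|>|B_r|/2^{n+1}$. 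The factor $(\rho/r)^n$ in the constant then comes from the dichotomy: the isoperimetric lemma controls only $\min\{|A_t\cap\bar\Omega_r|,|A_t^c\cap\bar\Omega_r|\}$, and when the complement is the smaller set one converts via $|A_t\cap\bar\Omega_r|\le|B_\rho|=(\rho/r)^n|B_r|\le 2^{n+1}(\rho/r)^n|A_t^c\cap\bar\Omega_r|$.

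The genuine gap is the step you yourself flag as the ``main obstacle'': the quantitative relative isoperimetric inequality on the irregular, possibly non-convex domains $\bar\Omega_r\subset\bar\Omega_R\subset B_\rho$ with the explicit constant $C(n)\rho^n/\epsilon^n$. This is not an off-the-shelf Poincar\'e--Sobolev inequality (standard versions hold on balls, cubes, or John domains with constants depending on the domain's geometry, which is not controlled here), and your proposal does not supply an argument for it. The paper isolates it as Lemma 4.1 and proves it as follows: given disjoint measurable $A,A^c$ with $A\cup A^c=\Omega_2$ and $\mathrm{dist}(\Omega_1,\partial\Omega_2)\ge 2\epsilon$, consider the continuous function $\xi(x)=|A\cap B_\epsilon(x)|/|B_\epsilon|$ on $\Omega_1$. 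Either $\xi=\tfrac12$ somewhere — and the classical relative isoperimetric inequality on that single $\epsilon$-ball already bounds $|B_\epsilon|$, hence $|\Omega_1|\le(\rho/\epsilon)^n|B_\epsilon|$, by $C(n)|\partial A\cap\partial A^c|^{n/(n-1)}$ — or $\xi>\tfrac12$ (resp.\ $<\tfrac12$) everywhere, in which case one covers $\Omega_1$ by $N\le C(n)(\rho/\epsilon)^n$ balls of radius $\epsilon$ contained in $\Omega_2$, applies the isoperimetric inequality on each (where the minority set is $A^c$, resp.\ $A$), and sums. The hypothesis $R-r>2\sqrt2\,\epsilon$ is what guarantees $\mathrm{dist}(\bar\Omega_r,\partial\bar\Omega_R)\ge 2\epsilon$ so that all these $\epsilon$-balls stay inside $\bar\Omega_R$. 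Without this lemma (or an equivalent substitute), the constant $(\rho^2/(r\epsilon))^{n-1}$ cannot be extracted, so you should supply it to complete the proof.
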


We first state and prove a generalization of Lemma 3.2 from \cite{CWY}.
\begin{lemma}\label{iso}
    Let $\Omega_1\subset\Omega_2\subset B_{\rho}\subset\re^n$ and $\epsilon>0$. Suppose that dist$(\Omega_1,\pd\Omega_2)\geq 2\epsilon$; $A$ and $A^c$ are disjoint measurable sets such that $A\cup A^c = \Omega_2$. Then
    \[
        \min\{|A\cap\Omega_1|,|A^c\cap \Omega_1|\}\leq C(n)\frac{\rho^n}{\epsilon^n}|\pd A\cap \pd A^c|^\frac{n}{n-1}.
    \]
\end{lemma}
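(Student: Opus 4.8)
The plan is to prove this relative isoperimetric-type inequality by combining the classical relative isoperimetric inequality on a nice domain (like a ball) with a Poincar\'e/John-domain type argument that transfers the estimate from $\Omega_1$ to the larger $\Omega_2 \subset B_\rho$, at the cost of the factor $(\rho/\epsilon)^n$ coming from the separation hypothesis $\mathrm{dist}(\Omega_1, \pd\Omega_2) \ge 2\epsilon$. Concretely, I would first reduce to the case where the perimeter term $|\pd A \cap \pd A^c|$ is finite (otherwise there is nothing to prove), so that by the structure theory of sets of finite perimeter $A$ and $A^c$ are Caccioppoli sets inside $\Omega_2$.

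First I would recall the classical relative isoperimetric inequality on a ball: for any measurable $E \subset B_R$, $\min\{|E\cap B_R|, |E^c \cap B_R|\}^{(n-1)/n} \le C(n)\,\mathrm{Per}(E; B_R)$. The issue is that our set $A$ lives in $\Omega_2$, not in a ball, and $\Omega_2$ need not be connected or regular; this is exactly why the separation hypothesis is needed. The key step is to cover $\Omega_1$ by balls $B_\epsilon(x_j)$ with $x_j \in \Omega_1$ — since $\mathrm{dist}(\Omega_1, \pd\Omega_2) \ge 2\epsilon$, each such ball (in fact $B_{2\epsilon}(x_j)$) is contained in $\Omega_2$, so the relative isoperimetric inequality applies inside each $B_\epsilon(x_j)$ with the perimeter of $A$ measured inside $B_\epsilon(x_j) \subset \Omega_2$. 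A Vitali covering argument gives a finite collection of such balls covering $\Omega_1$ with bounded overlap; the number of balls needed is controlled by $|B_\rho|/|B_\epsilon| \sim (\rho/\epsilon)^n$ since everything sits inside $B_\rho$.

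Then I would argue by a dichotomy. Either in a definite fraction of the covering balls $A$ occupies at most half the ball (call these ``$A$-small'' balls) — in which case summing the local relative isoperimetric inequalities over the $A$-small balls, using bounded overlap of perimeter contributions and the elementary inequality $\sum a_j^{(n-1)/n} \le (\sum a_j)^{(n-1)/n} \cdot (\#)^{1/n}$ won't directly work, so instead I'd bound $|A \cap \Omega_1| \le \sum_j |A \cap B_\epsilon(x_j)| \le C \sum_j \mathrm{Per}(A; B_\epsilon(x_j))^{n/(n-1)}$ and then compare to $\big(\sum_j \mathrm{Per}(A; B_\epsilon(x_j))\big)^{n/(n-1)} \le \big(C(n)\,\mathrm{Per}(A;\Omega_2)\big)^{n/(n-1)}$ using $n/(n-1) > 1$ and nonnegativity — or symmetrically $A^c$ is small in a fraction of balls. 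Chaining these and absorbing the number of balls $(\rho/\epsilon)^n$ into the constant yields $\min\{|A\cap\Omega_1|, |A^c \cap \Omega_1|\} \le C(n) (\rho/\epsilon)^n |\pd A \cap \pd A^c|^{n/(n-1)}$, where I identify $\mathrm{Per}(A;\Omega_2)$ with $|\pd A \cap \pd A^c|$ as the notation in the statement intends.

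The main obstacle I anticipate is handling the ``mixed'' case cleanly — it can happen that $A$ is large in some covering balls and small in others, so neither $A$ nor $A^c$ is uniformly small across the whole cover, and one must either pass through a connectedness/chaining argument (linking balls where $A$ dominates to balls where $A^c$ dominates, each link contributing perimeter) or invoke a suitable version already packaged in the literature. Given that the paper cites this as ``a generalization of Lemma 3.2 from \cite{CWY},'' I expect the cleanest route is to follow the CWY argument verbatim with $B_\rho$ in place of their ball and tracking the $\epsilon$-dependence; the role of $\epsilon$ is solely to guarantee the covering balls centered in $\Omega_1$ stay inside $\Omega_2$ so that local perimeter is measured against the full ambient set, and the role of $\rho$ is to bound the cardinality of the cover.
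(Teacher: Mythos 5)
Your overall skeleton matches the paper's: cover $\Omega_1$ by $\epsilon$-balls centered in $\Omega_1$ (each contained in $\Omega_2$ by the separation hypothesis), apply the relative isoperimetric inequality in each ball, and pay the factor $(\rho/\epsilon)^n$ for the cardinality of the cover. The summation step you propose is also the right one: if the \emph{same} set (say $A^c$) is the minority in \emph{every} covering ball, then $|A^c\cap\Omega_1|\le\sum_j|A^c\cap B_\epsilon(x_j)|\le C(n)\sum_j|\pd A\cap\pd A^c|_{B_\epsilon(x_j)}^{n/(n-1)}\le C(n)\frac{\rho^n}{\epsilon^n}|\pd A\cap\pd A^c|^{n/(n-1)}$, exactly as in the paper's Cases 2 and 3.

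However, there is a genuine gap at the step you yourself flag: the ``mixed'' case. Your stated dichotomy (``$A$ is small in a definite fraction of the balls, or symmetrically $A^c$ is'') is not a dichotomy that does any work --- in every ball one of the two sets trivially occupies at most half, and knowing that $A$ is small in \emph{some} balls gives no control on $|A\cap\Omega_1|$ because $A$ may be large in the remaining balls. You then defer to ``a connectedness/chaining argument'' or to ``a suitable version already packaged in the literature,'' which is precisely the content that needs to be supplied. The paper resolves it with one clean observation: the density function $\xi(x)=|A\cap B_\epsilon(x)|/|B_\epsilon|$ is continuous on $\Omega_1$, so either it equals $1/2$ at some $x_0$, or it stays strictly on one side of $1/2$ throughout. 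In the first case no summation is needed at all: the relative isoperimetric inequality in the single ball $B_\epsilon(x_0)$ gives $|B_\epsilon|/2\le C(n)|\pd A\cap\pd A^c|^{n/(n-1)}$, and since $\min\{|A\cap\Omega_1|,|A^c\cap\Omega_1|\}\le|B_\rho|=(\rho/\epsilon)^n|B_\epsilon|$, the conclusion follows immediately. In the second case the minority set is the same in every ball and your summation applies. Without this (or an equivalent chaining argument), your proof does not close; I recommend adding the intermediate-value trichotomy explicitly.
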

\begin{proof}
   Define the following continuous function on $\Omega_1$:
   \[
   \xi(x) = \frac{|A\cap B_{\epsilon}(x)|}{|B_\epsilon|}.
   \]
   Case $1$. $\xi(x_0)=\frac{1}{2}$ for some $x_0\in \Omega_1$. We then have that $B_\epsilon(x_0)\subset\Omega_2$ by dist$(\Omega_1,\pd\Omega_2)\geq 2\epsilon$. From the classical relative isoperimetric inequality for balls \cite[Theorem 5.3.2]{LinYang}, we have
   \begin{align*}
       \frac{|B_\epsilon|}{2} &= |A\cap B_\epsilon(x_0)|\\
       &\leq C(n)|\pd(A\cap B_\epsilon(x_0))\cap\pd(A^c\cap B_\epsilon(x_0))|^\frac{n}{n-1}\\
       &\leq C(n)|\pd A\cap \pd A^c|^\frac{n}{n-1}.
   \end{align*}
   Hence,
   \[
   \min\{|A\cap\Omega_1|,|A^c\cap \Omega_1|\}\leq|\Omega_1|\leq|B_\rho|=\frac{\rho^n}{\epsilon^n}|B_\epsilon|\leq C(n)\frac{\rho^n}{\epsilon^n}|\pd A\cap \pd A^c|^\frac{n}{n-1}.
   \]
   Case $2$. $\xi(x)>\frac{1}{2}$ for all $x\in\Omega_1$. Cover $\Omega_1$ by $N\leq C(n)\frac{\rho^n}{\epsilon^n}$ balls of radius epsilon $B_\epsilon(x_i)$ for some uniform constant $C(n)$ since $\Omega_1$ is bounded. Note that all of these balls are in $\Omega_2$ since dist$(\Omega_1,\pd\Omega_2)\geq2\epsilon$. Thus,
   \[
   |A^c\cap B_\epsilon(x_i)|= \min\{|A\cap B_\epsilon(x_i)|,|A^c\cap B_\epsilon(x_i)|\} \leq C(n)|\pd A\cap \pd A^c|^\frac{n}{n-1}.
   \]
   Summing over the cover, we get
   \[
   |A^c\cap \Omega_1|\leq \sum_{i=1}^N|A^c\cap B_\epsilon(x_i)|\leq C(n)\frac{\rho^n}{\epsilon^n}|\pd A\cap \pd A^c|^\frac{n}{n-1}.
   \]
   Case $3$. $\xi(x)<\frac{1}{2}$ for all $x\in \Omega_1$. Repeating the same proof in Case $2$, but with $A$ instead of $A^c$, yields the same result.
\end{proof}

\begin{proof}[Proof of Proposition \ref{sobo}]
Let $M=||f||_{L^\infty(B_r)}$. If $M\leq \ti{f}$, then $(f-\ti{f})^+=0$ on $B_r$, and hence, the left hand side is zero, from which the result follows immediately. We assume $\ti{f}< M$. By the Morse-Sard Lemma \cite[Lemma 13.15]{maggi},\cite{sard}, $\{x | f(x) = t\}$ is $C^1$ for almost all $t \in (\ti{f},M)$. We first show that for such $t$,
\begin{equation}\label{fiso}
   |\{x | f(x)> t\}\cap B_r|_g\leq C(n)\frac{\rho^{2n}}{r^n\epsilon^{n}}|\{x | f(x) = t\}\cap B_R|_g^\frac{n}{n-1}. 
\end{equation}
Note $|\cdot|_g$ is the metric with respect to $g$, and $|\cdot|$ is the Euclidean metric.

Let $t>\ti{f}$. It must be that
\[
    \frac{|B_r|}{2} > |\{x | f(x) > t\}\cap B_r|
\]
since otherwise
\[
M = \frac{2}{|B_r|}\int_0^M\frac{|B_r|}{2}dt\leq \frac{2}{|B_r|}\int_0^M|\{x|f(x)>t\}\cap B_r|dt \leq \frac{2}{|B_r|}\int_{B_R}fdx=\ti{f} < M.
\]
From this, it follows
\begin{equation}\label{Atrot}
    |\{x | f(x) \leq t\}\cap B_r| > \frac{|B_r|}{2}.
\end{equation}

Let $A_t = \{\bar{x}|f(\bar{x}) > t\}\cap\bar{\Omega}_R$. From Lemma \ref{iso}, we have that
\[
\min\{|A_t\cap \bar{\Omega}_r|,|A_t^c\cap\bar{\Omega}_r|\}\leq C(n)\frac{\rho^n}{\epsilon^n}|\pd A_t\cap \pd A_t^c|^\frac{n}{n-1}.
\]

If $|A_t\cap \bar{\Omega}_r|\leq |A_t^c\cap \bar{\Omega}_r|$, then
\begin{align}
    |A_t\cap \bar{\Omega}_r|_{g(\bar{x})}&\leq 2^\frac{n}{2}|A_t\cap \bar{\Omega}_r|\nonumber\\
    &\leq C(n)\frac{\rho^n}{\epsilon^n}|\pd A_t\cap \pd A_t^c|^\frac{n}{n-1}_{g(\bar{x})}.\nonumber
\end{align}

On the other hand, if $|A_t\cap \bar{\Omega}_r|> |A_t^c\cap \bar{\Omega}_r|$, from \eqref{Atrot}, we have
\[
|A_t^c\cap\bar{\Omega}_r|>\frac{|B_r|}{2^{n+1}},
\]
and so
\[
|A_t\cap\bar{\Omega}_r|\leq \frac{\rho^n}{r^n}|B_r|\leq 2^{n+1}\frac{\rho^n}{r^n}|A_t^c\cap\bar{\Omega}_r|.
\]
Therefore
\[
|A_t\cap\bar{\Omega}_r|_{g(\bar{x})}\leq C(n)\frac{\rho^n}{r^n}|A_t^c\cap\bar{\Omega}_r|\leq C(n)\frac{\rho^{2n}}{r^n\epsilon^n}|\pd A_t\cap\pd A_t^c|^\frac{n}{n-1}_{g(\bar{x})}.
\]

In either case, we have
\[
|A_t\cap\bar{\Omega}_r|_{g(\bar{x})}\leq C(n)\frac{\rho^{2n}}{r^n\epsilon^n}|\pd A_t\cap\pd A_t^c|^\frac{n}{n-1}_{g(\bar{x})},
\]
which in our original coordinates is \eqref{fiso}.

We get
\begin{align*}
    \bigg[\int_{B_r}&|(f-\ti{f})^+|^\frac{n}{n-1}dv_g\bigg]^\frac{n-1}{n}\\
    &=\left[\int_0^{M-\ti{f}}|\{x | f(x) - \ti{f}> t\}\cap B_r|_g dt^\frac{n}{n-1}\right]^\frac{n-1}{n} \text{ via Layer cake \cite[Ex 1.13]{maggi}}\\
    &\leq \int_0^{M-\ti{f}}|\{ x | f(x) - \ti{f}> t\}\cap B_r|^\frac{n-1}{n}_g dt \text{ via the H-L-P inequality \cite[(5.3.3)]{LinYang}}\\
    &\leq C(n)\left(\frac{\rho^2}{r\epsilon}\right)^{n-1}\int_{\ti{f}}^M|\{x| f(x) = t\}\cap B_R|_g dt \text{ via \eqref{fiso}}\\
    &\leq C(n)\left(\frac{\rho^2}{r\epsilon}\right)^{n-1}\int_{B_R}|\nabla_g(f - \ti{f})^+|dv_g \text{ via the co-area formula \cite[Thm 4.2.1]{LinYang}}
\end{align*}
which completes the proof.
\end{proof}

\section{Proof of the main Theorems}
We now prove Theorem \ref{main1} from which Theorem \ref{main0} follows.
\begin{proof}[Proof of Theorem \ref{main1}]
For simplifying notation in the remaining proof, we assume $R=2n+2$ and $u$ is a solution on $B_{2n+2}\subset\mathbb{R}^n$. Then by scaling $v(x)=\frac{u(\frac{R}{2n+2}x)}{(\frac{R}{2n+2})^2}$, we get the estimate in Theorem \ref{main1}. The proof follows in the spirit of \cite[Section 3]{CWY}. Under our assumption $|\Theta|\geq (n-1)\frac{\pi}{2}$, we have that $u$ is convex. Note $C=C(n,\nu_1,\nu_2)(1 + (\osc_{B_{2n+2}}(u))^2)$
is the positive constant from \eqref{J}.
\begin{itemize}

\item[Step 1.] We use the rotated Lagrangian graph $X = (\bar{x},D\bar{u}(\bar{x}))$ via the Lewy-Yuan rotation, as illuatrated in Section \ref{LY}. Consider $b = \log V$ on the manifold $X= (x,Du(x))$, where $V$ is the volume element in the original coordinates. In the rotated coordinates $b(\bar{x})= \log V(\bar{x})$ satisfies 
\begin{align}
    \bigg(g^{ij}(\bar{x})\frac{\pd^2}{\pd\bar{x}_i\pd\bar{x_j}}-& g^{jp}(\bar{x})\frac{\pd \Theta(x(\bar{x}),u(x(\bar{x})), \frac{\sqrt{2}}{2}\bar{x} + \frac{\sqrt{2}}{2}D\bar{u}(\bar{x}))}{\pd\bar{x}_q}\frac{\pd^2 \bar{u}(\bar{x})}{\pd \bar{x}_q\pd\bar{x}_p} \frac{\pd}{\pd \bar{x}_j}\bigg) b(\bar{x})\nonumber\\
    &= \Delta_{g(\bar{x})}b(\bar{x}) \geq - C. \label{subsol}
\end{align}
The nondivergence and divergence elliptic operator are both uniformly elliptic due to \eqref{dbaru}.

From \eqref{lyrot}, we have
\begin{equation*}
\begin{cases}
     x(\bar{x}) = \frac{\sqrt{2}}{2}\bar{x} - \frac{\sqrt{2}}{2}D\bar{u}(\bar{x})\\
     Du(x(\bar{x})) = \frac{\sqrt{2}}{2} \bar{x} + \frac{\sqrt{2}}{2}D\bar{u}(\bar{x})
\end{cases}
\end{equation*} 
from which it follows that
\begin{align}
    &\frac{\pd \Theta(x(\bar{x}),u(x(\bar{x})), \frac{\sqrt{2}}{2}\bar{x} + \frac{\sqrt{2}}{2}D\bar{u}(\bar{x}))}{\pd\bar{x}_q}\nonumber \\
    &= \sum_{j=1}^n\Theta_{x_j}\frac{\pd x_j}{\pd \bar{x}_q} + \Theta_u\sum_{j=1}^n u_j\frac{\pd x_j}{\pd\bar{x}_q} + \sum_{j=1}^n\Theta_{u_j}\frac{\pd}{\pd\bar{x}_q}\left(\frac{\sqrt{2}}{2}\bar{x}_j + \frac{\sqrt{2}}{2}\bar{u}_j\right)\nonumber\\
    &=\frac{\sqrt{2}}{2}(\Theta_{x_q} + \Theta_u u_q)( 1- \bar{\lambda}_q) + \frac{\sqrt{2}}{2}\Theta_{u_q}(1 + \bar{\lambda}_q)\nonumber\\
    &\leq \sqrt{2}\nu_1(1 + \osc_{B_{2n+2}}(u)).\label{D1bound}
\end{align}

Denote
\[
\tilde{b} = \frac{2}{|B_1(0)|}\int_{B_{2n}(0)}\log V dx.
\]
Via the local mean value property of nonhomogeneous subsolutions \cite[Theorem 9.20]{GT} (see Appendix Theorem \ref{locmvp}), we get the following, from \eqref{subsol} and \eqref{D1bound}:
\begin{align*}
    (b - \tilde{b})^+(0) &= (b - \tilde{b})^+(\bar{0})\\
    &\leq C(n)\left[\ti{C}^{\;n-1}\left(\int_{B_{1/\sqrt{2}}(\bar{0})}|(b - \tilde{b})^+(\bar{x})|^\frac{n}{n-1}d\bar{x} \right)^\frac{n-1}{n} + C\left(\int_{B_{1/\sqrt{2}}(\bar{0})}d\bar{x}\right)^\frac{1}{n}\right]\\
    &\leq C(n)\left[\ti{C}^{\;n-1}\left(\int_{B_{1/\sqrt{2}}(\bar{0})}|(b - \tilde{b})^+(\bar{x})|^\frac{n}{n-1}dv_{g(\bar{x})} \right)^\frac{n-1}{n} + C\left(\int_{B_{1/\sqrt{2}}(\bar{0})}dv_{g(\bar{x})}\right)^\frac{1}{n}\right]\\
    &\leq C(n)\left[\ti{C}^{\;n-1}\left(\int_{B_1(0)}|(b - \tilde{b})^+(x)|^\frac{n}{n-1}dv_{g(x)} \right)^\frac{n-1}{n} + C\left(\int_{B_1(0)}dv_g\right)^\frac{1}{n}\right]
\end{align*}
where $\ti{C}=(1 + \nu_1 + \nu_1\osc_{B_{2n+2}}(u))$ and $C=C(n,\nu_1,\nu_2)(1 + (\osc_{B_{2n+2}}(u))^2)$ is the positive constant from \eqref{J}.

The above mean value inequality can also be derived using the De Giorgi-Moser iteration \cite[Theorem 8.16]{GT}.

\item[Step 2.] By Proposition \ref{sobo} with $\rho = \rho(2n+1)$ and Lemma \ref{IJlem}, we have
\begin{align}
    b(0) &\leq C(n)\ti{C}^{\;n-1}\rho^{2(n-1)}\int_{B_{2n}}|\nabla_g(b - \tilde{b})^+|dv_g + CC(n)\left(\int_{B_{2n}}Vdx\right)^\frac{1}{n}+ C(n)\int_{B_{2n}}\log V dx\nonumber\\
    &\leq C(n)\ti{C}^{\;n-1}\rho^{2(n-1)}\left(\int_{B_{2n}}|\nabla_gb|^2dv_g\right)^\frac{1}{2}\left(\int_{B_{2n}}Vdx\right)^\frac{1}{2} \nonumber\\
    &\hspace{1in}+CC(n)\left(\int_{B_{2n}}Vdx\right)^\frac{1}{n}+ C(n)\int_{B_{2n}} V dx\nonumber\\
    &\leq C(n)(1 + \ti{C}^{\;n-1}(1 + C)^\frac{1}{2})\rho^{2(n-1)}\int_{B_{2n+1}}V dx+ CC(n)\left(\int_{B_{2n+1}}Vdx\right)^\frac{1}{n}. \label{b0}
\end{align}

\item[Step 3.] We bound the volume element using the rotated coordinates. From \eqref{barmetric}, we have
\[
Vdx = \bar{V}d\bar{x} \leq 2^\frac{n}{2}d\bar{x}.
\]
Since $\bar{\Omega}_{2n+1} = \bar{x}(B_{2n+1}(0))$, we get
\[
\int_{B_{2n+1}}Vdx= \int_{\bar{\Omega}_{2n+1}}\bar{V}d\bar{x} \leq 2^\frac{n}{2}\int_{\bar{\Omega}_{2n+1}}d\bar{x}\leq C(n)\rho^n.
\]
Hence, from \eqref{b0}, we get
\begin{equation}\label{b02}
    b(0)\leq C(n)(1 + \ti{C}^{\;n-1}(1 + C)^\frac{1}{2})\rho^{3n-2} + CC(n)\rho\leq C(n)(1 + \ti{C}^{\;n-1}(1 + C)^\frac{1}{2} + C)\rho^{3n-2}.
\end{equation}
By plugging in \eqref{rho}, $\ti{C}$, and $C$, and using
\[
(a + b)^p \leq 2^p(a^p + b^p),\quad\text{for } a,b\geq 0, p>0,
\]
as well as Young's inequality, we have
\begin{align}
    C(n)&(1 +\ti{C}^{\;n-1}(1 + C)^\frac{1}{2} + C)\rho^{3n-2}\nonumber\\
    &\leq C(n,\nu_1,\nu_2)(1 + (\osc_{B_{2n+2}}(u))^{n-1}+ (\osc_{B_{2n+2}}(u))^{n}\nonumber\\
    &\hspace{1in}+(\osc_{B_{2n+2}}(u))^2)(1 + (\osc_{B_{2n+2}}(u))^{3n-2})\nonumber\\
    &\leq C(n,\nu_1,\nu_2)(1 + (\osc_{B_{2n+2}}(u))^{4n-2}).\label{Cosc}
\end{align}

By combining \eqref{b02} and \eqref{Cosc} and exponentiating, we get 
\[ 
|D^2 u(0)|\leq C_1\exp[C_2(\osc_{B_{2n+2}}(u))^{4n-2}]
\] 
where $C_1$ and $C_2$ are positive constants depending on $\nu_1,\nu_2$, and $n$.
\end{itemize}

\end{proof}

\begin{proof}[Proof of Theorem \ref{main0}]
Repeating the above proof, but with the constant $C$ for equations \eqref{s} and \eqref{rotator} from Corollaries \ref{scor} and \ref{rcor} respectively, we get the desired estimate. Note, in the case of \eqref{tran}, we get $C = \ti{C}= C(n,t_2,t_3)$, and so \eqref{b02} becomes
\[
b(0)\leq C(n,t_2,t_3)\rho^{3n-2}
\]
resulting in the estimate
\[
|D^2 u(0)|\leq C_1\exp[C_2(\osc_{B_{2n+2}}(u))^{3n-2}]
\]
where $C_1$ and $C_2$ depend on $n,t_2,t_3$.
\end{proof}

\begin{remark}\label{dbvp}
    We prove analyticity of a $C^0$ viscosity solution within its domain by outlining a modification of the approach in \cite[Section 4]{CWY}. Note, we obtain smooth approximations via \cite[Theorem 4]{CNS}, \cite{Trudinger}. Let
    \[
    F(x,u,Du,D^2u) = G(D^2u) - \Theta(x,u,Du) = \sum_{j=1}^n \arctan\lambda_j - \Theta(x,u,Du).
    \]
    We wish to apply Evans-Krylov-Safonov theory (\cite[Theorem 17.15]{GT}) which requires $F(x,z,p,r)$ to be concave in $z,p,r$ and the following structure conditions to hold
    \begin{align*}
    &0< \ell|\xi|^2 \leq F_{ij}(x,z,p,r)\xi_i\xi_j\leq \Lambda|\xi|^2,\\
    &|F_p|,|F_z|,|F_{rx}|,|F_{px}|,|F_{zx}|\leq \mu\ell,\\
    &|F_x|,|F_{xx}|\leq \mu\ell(1 + |p| + |r|),
    \end{align*}
    for all nonzero $\xi \in\re^n$, where $\ell$ is a nonincreasing function of $|z|$, and $\Lambda$ and $\mu$ are nondecreasing functions of $|z|$. Note, for our operator $F$ defined above, $F_{rx}=0$. 
    
    We have that $G(D^2u)$ is concave, and by our assumption, $\Theta(x,z,p)$ is partially convex in $p$. By additionally assuming partial convexity of $\Theta$ in $z$, we get that $F$ is concave in $z,p,r$ as desired. Note, for equations \eqref{s},\eqref{tran},\eqref{rotator}, this condition is naturally satisfied. 
    
    Theorems \ref{main0} and \ref{main1} give us that
    \[
    0 < \frac{1}{1 + [C(\osc_{B_R}(u))]^2}|\xi|^2 \leq F_{ij}(x,z,p,r)\xi_i\xi_j\leq |\xi|^2.
    \]
    Taking $\ell = \frac{1}{1 + C^2}$ and $\mu = \frac{\nu_1 + \nu_2}{\ell}$, we see that the other conditions are satisfied. Hence, we achieve a $C^{2,\alpha}$ bound. By applying classical elliptic theory \cite[Lemma 17.16]{GT} and \cite[p202]{morrey}, to solutions of \eqref{s},\eqref{tran},\eqref{rotator} we get the analyticity of $u$. 
\end{remark}

\section{Appendix}
Our proof requires an explicit dependence of the constants appearing in Theorem 9.20 of \cite{GT} on the oscillation of the potential, when applied to \eqref{subsol}. We state and prove an adaptation of \cite[Theorem 9.20]{GT} to our specific case.

First, we clarify some notations and terminology. We have
\begin{align*}
   L &= a^{ij}(\bar{x})\frac{\pd}{\pd \bar{x}_i\pd \bar{x}_j} + b^j(\bar{x})\frac{\pd}{\pd \bar{x}_j} \\
   &= g^{ij}(\bar{x})\frac{\pd}{\pd \bar{x}_i\pd \bar{x}_j} -g^{jp}(\bar{x})\frac{\pd \Theta(x(\bar{x}),u(x(\bar{x})), \frac{\sqrt{2}}{2}\bar{x} + \frac{\sqrt{2}}{2}D\bar{u}(\bar{x}))}{\pd\bar{x}_q}\frac{\pd^2 \bar{u}(\bar{x})}{\pd \bar{x}_q\pd\bar{x}_p} \frac{\pd}{\pd \bar{x}_j}.
\end{align*}
From this and \eqref{dbaru}, it follows that $\frac{1}{2}|\xi|^2 \leq a^{ij}(\bar{x})\xi_i\xi_j\leq |\xi|^2$, and we have from \eqref{D1bound}:
\[
|b|\leq \sqrt{2n}\nu_1(1 + \osc_{B_{2n+2}}(u)).
\]
By $\Omega$, we denote a $C^{1,1}$ domain in $\mathbb R^n$.
\begin{theorem}\label{locmvp}
    Let $u\in C^2(\Omega)\cap W^{2,n}(\Omega)$ and suppose that $Lu\geq f$, where $f\in L^n(\Omega)$. Then for any ball $B= B_{2R}(y)\subset\Omega$, we have
    \[
    \sup_{B_R(y)} u\leq C(n)\left\{(RC)^{n-1}||u^+||_{L^\frac{n}{n-1}(B)} + R||f||_{L^n(B)}\right\}
    \]
    where $C=(1 + \nu_1 + \nu_1\osc_{B_{2n+2}}(u))$.
\end{theorem}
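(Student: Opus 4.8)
The plan is to adapt the proof of \cite[Theorem 9.20]{GT} --- itself built on the Alexandrov--Bakelman--Pucci maximum principle and the local maximum principle machinery of Section 9.7--9.8 of \cite{GT} --- while carefully tracking how the lower-order coefficient $b$ enters the constants. Recall that in \cite{GT}, for the operator $L = a^{ij}\pd_{ij} + b^j\pd_j$ with $\frac{1}{2}|\xi|^2 \le a^{ij}\xi_i\xi_j \le |\xi|^2$, the relevant scale-invariant quantity is $|b|/\lambda$ integrated appropriately; the ABP estimate and the subsequent Krylov--Safonov-type iteration produce a constant depending on $n$ and on $\frac{R|b|}{\lambda}$ in an exponential/polynomial fashion. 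Here $\lambda = \frac{1}{2}$ is fixed, and from \eqref{D1bound} and the bound $-I \le D^2\bar u \le I$ we have $|b| \le \sqrt{2n}\,\nu_1(1+\osc_{B_{2n+2}}(u))$, which is where the factor $C = 1+\nu_1+\nu_1\osc_{B_{2n+2}}(u)$ enters. So the first step is to recall the ABP estimate in the form $\sup_{\Omega} u^+ \le \sup_{\pd\Omega} u^+ + C(n, \mathrm{diam}\,\Omega, \|b\|_{L^n}/\lambda)\,\|f/\lambda\|_{L^n}$, applied on small sub-balls, and then to run the De Giorgi--Moser-type covering/iteration argument exactly as in the proof of \cite[Theorem 9.20]{GT}, but keeping the $b$-dependence explicit rather than absorbing it into a generic constant.

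Concretely, I would proceed as follows. First, normalize: by translation assume $y = 0$, and rescale $\bar x \mapsto \bar x / (2R)$ so that the ball $B_{2R}(0)$ becomes $B_1(0)$; under this rescaling $a^{ij}$ is unchanged (still $\frac12 I \le (a^{ij}) \le I$), $b^j$ gets multiplied by $2R$, and $f$ gets multiplied by $(2R)^2$, so that the natural quantities become $2R|b| \le C(n)\,RC$ and $\|f\|$ scaled correspondingly; the stated estimate is exactly the scale-invariant form. Second, apply the local maximum principle \cite[Theorem 9.20]{GT} in its original form to the rescaled problem, which gives
\[
\sup_{B_{1/2}(0)} u \le C\Big(n, \tfrac{2R|b|}{\lambda}\Big)\,\Big\{ \|u^+\|_{L^{n/(n-1)}(B_1(0))} + \|f/\lambda\|_{L^n(B_1(0))} \Big\}.
\]
Third, inspect the proof of that theorem to confirm that the constant $C(n, 2R|b|/\lambda)$ can be taken of the polynomial form $C(n)\,(1 + 2R|b|/\lambda)^{n-1}$ --- this is the only genuinely technical point, and it follows because the iteration in \cite{GT} at each of $O(\log(1/\epsilon))$-many scales loses a multiplicative factor controlled by a fixed power of $(1 + R|b|/\lambda)$, and one counts the exponents. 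Fourth, substitute $\lambda = \frac12$ and $|b| \le \sqrt{2n}\,\nu_1(1+\osc_{B_{2n+2}}(u)) \le C(n)\,C$ with $C = 1+\nu_1+\nu_1\osc_{B_{2n+2}}(u)$, absorb the dimensional constants, and undo the rescaling to recover the displayed inequality with the $R^{n-1}$ and $R$ factors in place.

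The main obstacle I expect is \emph{precisely} the third step: verifying that the dependence of the \cite[Theorem 9.20]{GT} constant on the drift term is genuinely polynomial of degree $n-1$ in $1 + R|b|/\lambda$, rather than merely ``some increasing function.'' This requires going into the proof of the weak Harnack inequality / local boundedness estimate (the Moser iteration or the Krylov--Safonov measure-theoretic argument, depending on which route \cite{GT} takes in Section 9.7--9.8) and bookkeeping how $\|b\|_{L^n}$ propagates through each step of the iteration. One clean way to finish is simply to re-derive the estimate from the ABP inequality directly: choose cutoffs $\eta$ supported on the annular regions $B_{R_{k+1}} \supset B_{R_k}$ with $R_k \uparrow R \cdot \frac{3}{4}$, apply ABP to $\eta^\beta u$ for suitable $\beta$, and note that the drift contribution to $L(\eta^\beta u)$ is bounded by $|b|\,|\nabla(\eta^\beta u)| \le C\,|b|\,(R_{k+1}-R_k)^{-1}(\eta^{\beta-1}u + \dots)$, so that each iteration step contributes a factor $1 + R|b|$ (times dimensional constants); summing the geometric series over the $O(1)$-many --- or after a dyadic refinement $O(\log)$-many --- steps yields the claimed power. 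Everything else (the H\"older/interpolation inequalities, the layer-cake representation, the final exponentiation back to $|D^2u(0)|$ carried out in Section 4) is routine and already appears in the body of the paper.
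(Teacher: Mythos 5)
Your overall strategy coincides with the paper's: rather than citing \cite[Theorem 9.20]{GT} as a black box, one reruns its proof for the specific operator $L$ and tracks how the drift coefficient $b$, bounded via \eqref{D1bound} by $\sqrt{2n}\,\nu_1(1+\osc_{B_{2n+2}}(u))$, propagates into the constant. The gap is that the one step you explicitly defer --- showing the constant is polynomial of degree exactly $n-1$ in $C=1+\nu_1+\nu_1\osc_{B_{2n+2}}(u)$ --- is the entire content of the theorem, and the mechanism you sketch for it is not the right one. The proof of \cite[Theorem 9.20]{GT} for the exponent $p=n/(n-1)$ is not a Moser or Krylov--Safonov iteration over $O(\log(1/\epsilon))$ many scales, nor a geometric series over annular cutoffs; it is a \emph{single} application of the ABP-type estimate \cite[Lemma 9.3]{GT} to $v=\eta u$ with the one polynomial cutoff $\eta=(1-|x|^2)^{\beta}$, $\beta=2(n-1)$, followed by an interpolation. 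Accumulating ``one factor of $1+R|b|$ per step'' would tie the exponent to the number of steps, not to $n-1$, and gives no route to the stated power.

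Here is how the exponent actually arises, which your outline would need to supply. On the upper contact set $\Gamma^+$ of $v$, concavity gives $|Du|\le 2(1+\beta)\eta^{-1/\beta}u$, so the drift contributes exactly one factor of $|b|$ (hence one factor of $C$) to the pointwise inequality $-a^{ij}D_{ij}v\le C\eta^{-2/\beta}v+f$. Lemma 9.3 then yields $\sup_B v\le C(n)\{C(\sup_B v^+)^{1-2/\beta}\|(u^+)^{2/\beta}\|_{L^n(B)}+\|f\|_{L^n(B)}\}$, and Young's inequality with conjugate exponents $p=1/(1-2/\beta)$ and $q=\beta/2=n-1$ turns the first term into $\epsilon\sup_B v^+ +\epsilon^{2-n}\|u^+\|_{L^{n/(n-1)}(B)}$. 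To absorb $\epsilon\sup_B v^+$ into the left-hand side one is forced to take $\epsilon\sim 1/(C(n)C)$, and it is precisely $C\cdot\epsilon^{2-n}=C(n)\,C^{n-1}$ that produces the power $n-1$: the exponent comes from the Young-absorption step interacting with the size of the drift, not from a product over scales. With that substitution (and your rescaling bookkeeping in Step 1, which is fine), the argument closes.
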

\begin{proof}
    Without loss of generality, we assume that $B= B_1(0)$, the general case is recovered via $x\to (x-y)/2R$. For $\beta = 2(n-1)$, we define the cutoff function $\eta$ by
    \[
    \eta(x)(1 - |x|^2)^\beta.
    \]
    Differentiating, we get
    \begin{align*}
        D_i\eta &= -2\beta x_i(1 - |x|^2)^{\beta-1},\\
        D_{ij}\eta & -2\beta\delta_{ij}(1-|x|^2)^{\beta-1} + 4\beta(\beta-1)x_ix_j(1-|x|^2)^{\beta - 2}.
    \end{align*}
    Set $v = \eta u$. We have
    \begin{align*}
        a^{ij}D_{ij}v &= \eta a^{ij}D_{ij}u +2 a^{ij}D_i\eta D_ju + ua^{ij}D_{ij}\eta\\
        &\geq \eta(f - b^iD_i u) + 2 a^{ij}D_i\eta D_ju + ua^{ij}D_{ij}\eta.
    \end{align*}
    Denote $\Gamma^+$ to be the upper contact set of $v$ in $B$. We have that $u > 0$ on $\Gamma^+$, and using the concavity of $v$ on $\Gamma^+$, we estimate
    \begin{align*}
        |Du| &= \frac{1}{\eta}|Dv - uD\eta|\\
        &\leq \frac{1}{\eta}\left(|Dv| + u|D\eta|\right)\\
        &\leq \frac{1}{\eta}\left(\frac{v}{1-|x|}+ u|D\eta| \right)\\
        &\leq 2(1 + \beta)\eta^{-1/\beta}u.
    \end{align*}
    Thus, on $\Gamma^+$, we have
    \begin{align*}
        -a^{ij}D_{ij}v &\leq [(16\beta^2+ 2\eta\beta)\eta^{-2/\beta} + 2\beta|b|\eta^{-1/\beta}]v + \eta f\\
        &\leq C\eta^{-2/\beta}v + f,
    \end{align*}
    where $C = C(n)(1 + \nu_1 + \nu_1\osc_{B_{2n+2}}(u))$. Applying \cite[Lemma 9.3]{GT}, we get
    \begin{align}
        \sup_B v&\leq C(n)\left|\left|\frac{a^{ij}D_{ij}v}{1/2}\right|\right|_{L^n(\Gamma^+)}\nonumber\\
        &\leq C(n)\left\{C||\eta^{-2/\beta}v^+||_{L^n(B)} + ||f||_{L^n(B)}\right\}\nonumber\\
        &\leq C(n)\left\{C(\sup_Bv^+)^{1-2/\beta}||(u^+)^{2/\beta}||_{L^n(B)} + ||f||_{L^n(B)}\right\}.\label{supv}
    \end{align}
    Let $q = \beta/2$, and so $p = 1/(1-2/\beta)$. Using Young's inequality and recalling $\beta = 2(n-1)$, we get that
    \begin{align*}
        (\sup_Bv^+)^{1-2/\beta}||(u^+)^{2/\beta}||_{L^n(B)} &\leq \epsilon (\sup_B v^+)^{p(1 - 2/\beta)} + \epsilon^{-q/p}||(u^+)^{2/\beta}||_{L^n(B)}^{\beta/2}\\
        &= \epsilon\sup_B v^+ + \epsilon^{2-n}||u^+||_{L^\frac{n}{n-1}(B)}.
    \end{align*}
    Plugging this into \eqref{supv}, we get 
    \[
    (1 - C(n)C\epsilon)\sup_B v\leq C(n)\left\{C\epsilon^{2-n}||u^+||_{L^\frac{n}{n-1}(B)} +||f||_{L^n(B)}\right\}.
    \]
    Let $\epsilon = \frac{1}{2C(n)C}$. We get
    \[
    \sup_B v\leq C(n)\left\{C(n)C^{\;n-1}||u^+||_{L^\frac{n}{n-1}(B)} + ||f||_{L^n(B)}\right\},
    \]
    from which our desired estimate follows.
\end{proof}

\bibliographystyle{amsalpha}
\bibliography{lib}

\end{document}